 \newtheorem{theorem}{Theorem}
	\newtheorem{example}[theorem]{Example}
	\newtheorem{lemma}[theorem]{Lemma}
	\newtheorem{remark}[theorem]{Remark}
	\newtheorem{definition}[theorem]{Definition}
\newcommand{\norm}[1]{\left\lVert #1 \right\rVert}
\newcommand{\rev}[1]{\textcolor{black}{#1}}
\title{Computing the density of the Kesten–Stigum limit\\ in supercritical Galton–Watson processes}
\author{Alice Cortinovis\thanks{Department of Computer Science, University of Pisa, PI, Italy and member of INdAM / GNCS. Email: {\tt  alice.cortinovis@unipi.it}} \and Sophie Hautphenne\thanks{School of Mathematics and Statistics, University of Melbourne, Australia. Email: {\tt sophiemh@unimelb.edu.au}} \and Stefano Massei\thanks{Department of Mathematics, University of Pisa, PI, Italy and member of INdAM / GNCS. Email: {\tt  stefano.massei@unipi.it}}}
\date{}
\begin{document}

\maketitle

\begin{abstract}

This paper proposes a novel numerical method for computing the density of the Kesten-Stigum limit random variable associated with a supercritical Galton-Watson process. This random variable captures the cumulative effect of early demographic fluctuations and determines the random amplitude governing long-term exponential population growth. 
Beyond classical branching process theory, the Kesten-Stigum limit plays a central role in scaling limits of density-dependent population models, where it induces random initial conditions for deterministic fluid approximations, and in statistical inference problems for growing populations. Despite its importance, computing its density in a stable and efficient manner for general offspring distributions remains a significant challenge. 
Our approach leverages the functional equation satisfied by the Laplace-Stieltjes transform of the limit distribution and combines it with a moment-matching reconstruction based on Laguerre polynomials with exponential damping. The resulting method is computationally efficient and applies to offspring distributions with bounded support. Its effectiveness is demonstrated on several numerical examples, including biologically motivated case studies.

\end{abstract}
\section{Introduction}
Galton-Watson (GW) branching processes are simple stochastic models used to describe the evolution of a population over discrete generations. In these models, each individual in a given generation produces a random number of offspring independently of all other individuals; see, e.g.,~\cite{Athreya2012,Harris1963}. More formally, a GW process is a  discrete-time Markov chain $\{Z_n\}_{n\in\mathbb N}$ taking values in the set of nonnegative integers, where $Z_n$ represents the population size at generation $n$. If $Z_n=0$ for some $n$, then $Z_k=0$ for all $k>n$, so that $0$ is an absorbing state. Otherwise, each individual in generation $n$ lives for one unit of time and, at the end of its life, gives birth to a random number $\theta$ of offspring. The distribution of $\theta$, called the \emph{offspring distribution}, is assumed to be independent of the generation and of the current population size. 

A GW process is fully characterized by the probability generating function of the offspring distribution (also called \emph{offspring generating function}), defined as
$$
P(z):= \sum_{j=0}^\infty p_j z^j, \qquad p_j := \mathbb P(\theta = j),
$$
where $p_j\ge 0$ for all $j\in\mathbb N$ and $\sum_{j=0}^\infty p_j=1$.

Given an initial population size $Z_0$, the evolution of the process is described by the recursion
$$
Z_n=\sum_{i=1}^{Z_{n-1}} \theta_{i}^{(n)}, \qquad n \ge 1,
$$
where $\{\theta_{i}^{(n)}\}_{i,n}$ are independent and identically distributed random variables with the same distribution as $\theta$. 

The long-term behavior of the population is governed by the mean offspring number
$$
m := \mathbb E[\theta]=\sum_{j=1}^\infty j p_j= P'(1),
$$
and depending on whether $m$ exceeds $1$ or not, different quantities are of interest, and methods for their computation have been proposed. 
When $m\le 1$, the population becomes extinct almost surely ($\lim_{n\to\infty} Z_n = 0$ with probability $1$) and the goal is  computing  the quasi-stationary distribution~\cite{hautphenne2020low}, i.e., the stationary distribution conditioned on the event of non extinction. 

The case of interest here is when $m > 1$, and the GW process is said \emph{supercritical}. The population of a supercritical GW process has a probability $q \in [0,1)$ of eventual extinction and a strictly positive probability $1-q$ of growing without bound, in the sense that $\lim_{n\to\infty} Z_n = \infty$ on the event of non-extinction. In this context, one is usually interested in computing the limit distribution of the process normalized by its mean; see the next sections for more details and applications. 

\subsection{Supercritical GW branching processes}\label{sec:supercritical}
In this paper, we focus on supercritical GW branching processes and assume that the process starts with a single individual, i.e., $Z_0=1$. Under this assumption, the extinction probability $q$ is the smallest nonnegative real solution of the equation $z=P(z)$; see~\cite[Chapter 1, Theorem 6.1]{Harris1963}. In particular, if the offspring distribution satisfies $P(0)=p_0=0$, then $q=0$. 

The expected population size at generation $n$ is given by
$$
\mathbb E[Z_n]=m^n,
$$
so that the mean population grows at a geometric rate, in accordance with the Malthusian law of growth. 

In many cases of interest, the rescaled process $\{Z_n/m^n\}_{n\in\mathbb N}$ converges almost surely to a nonnegative random variable $W$. This classical result is known as the \textit{Kesten--Stigum theorem}.  The existence of a non-trivial limit is ensured under mild moment conditions on the offspring distribution; in particular, finite variance, and in fact the weaker condition $\mathbb E[\theta\log\theta]<\infty$, are sufficient (see \cite[Theorem 6.2, Theorem 10.1]{Athreya2012}). In this setting, the limit random variable satisfies $\mathbb E[W]=1$ and $\mathbb P(W=0)=q<1$. For large values of $n$, the population size admits the approximation 
\begin{equation}\label{eq:WZn}
Z_n \approx W m^n,
\end{equation}
so that $W$ captures the cumulative effect of early stochastic fluctuations on the long-term growth of the population. 

\begin{remark}\label{rem:Z0}
If the process starts with $Z_0 = k > 1$ individuals, then the population can be viewed as the superposition of $k$ independent GW processes, each started from a single ancestor. In this case, the rescaled population size satisfies
\begin{equation}\label{Wk}
\frac{Z_n}{m^n} \;\xrightarrow{\text{a.s.}}\; \sum_{i=1}^k W_i=:W(k),
\end{equation}
where $\{W_i\}_{i=1}^k$ are independent copies of $W$. As a result, the distribution of $W$ captures the random component of long-term population growth arising from early stochastic fluctuations, for any finite initial population size.
\end{remark}

The distribution of $W$, and in particular its density $f(x)$, can be used to derive biologically relevant quantities for a newly founded ecological population, conditional on non-extinction. Two such applications are described below.

\paragraph{Distribution of the establishment time.}
Let $K>0$ denote a pre-defined establishment threshold, for example the minimum population size required for long-term viability.  Motivated by the approximation \eqref{eq:WZn} for large $n$, we introduce the continuous random variable
$$
\tau_K:= \frac{\log K - \log W}{\log m},
$$
so that the discrete establishment (hitting) time $T_K:=\min\{n\in\mathbb N: Z_n\ge K\}$ is heuristically approximated by $T_K\approx \lceil \tau_K\rceil$ (or by $\max\{0,\lceil \tau_K\rceil\}$ if needed).



Since $\log m > 0$, $\tau_K$ is a decreasing function of $W$: colonies with larger values of $W$ establish earlier.  
Working conditional on non-extinction (that is, $W>0$) with conditional density of $W \mid W > 0$ given by $f_+(w) = f(w)/(1-q)$ and corresponding cumulative distribution function $F_+(w)$, we obtain, for any $t \ge 0$,
\begin{align*}
\mathbb{P}(\tau_K \le t \mid W>0)
  &= \mathbb{P}\!\left(\frac{\log K - \log W}{\log m} \le t \,\middle|\, W>0\right) \\
  &= \mathbb{P}(W \ge K m^{-t} \mid W>0)
   = 1 - F_+\!\big(K m^{-t}\big).
\end{align*}
Differentiating with respect to $t$ gives the corresponding approximate density for the time at which the population first hits size $K$: 
\begin{equation}\label{eq:density_establishment_time}
g_{\tau_K}(t)
  \approx f_+\!\big(K m^{-t}\big)\, K m^{-t}\, \ln m.
\end{equation}
Hence, once $f$ (and thus $f_+$ and $F_+$) have been computed numerically, the distribution of the establishment time $T_K$ can be approximated directly. The latter is an asymptotic approximation, valid in the exponential-growth regime where $Z_n/m^n \approx W$. It becomes exact in the limit as $K\to\infty$ (corresponding to establishment occurring far in the exponential-growth regime), while for small thresholds, it provides qualitative insights into the variability of establishment times. 

\paragraph{Distribution of the population size at a fixed time.}
For large generation index $n$, the asymptotic approximation~\eqref{eq:WZn} implies
$$
\mathbb{P}(Z_n \le x \mid W>0) \approx F_+\!\left(\frac{x}{m^n}\right).
$$
In particular, if $Q_p$ denotes the $p$th quantile of $W \mid W>0$, then the corresponding quantile of $Z_n$ is approximately $m^n Q_p$.  
This provides a straightforward way to obtain approximate prediction intervals for future colony sizes: for instance, for large $n$, a 90\% prediction interval for $Z_n$ is given by
$$
[m^n Q_{0.05},\, m^n Q_{0.95}].
$$
As another consequence of the same approximation, for any threshold $K>0$, the probability that the population exceeds $K$ individuals by generation $n$ is
$$
\mathbb{P}(Z_n \ge K \mid W>0)
  \approx 1 - F_+\!\left(\frac{K}{m^n}\right).
$$


The importance of the limit random variable $W$ extends well beyond classical branching process theory.
In particular, $W$ appears naturally in scaling limits of density-dependent population models with large carrying capacities. When the population starts from a small number of individuals relative to the carrying capacity, early stochastic growth phases give rise to a random initial condition for the subsequent deterministic dynamics, obtained through a suitable transformation of $W$; see, e.g.,
\cite{barbour2016emergence,baker2018persistence,bauman2023approximation}.
A related phenomenon arises in branching models motivated by real-time PCR data, where the initial population size $Z_0=k$ is inferred from observations of a density-dependent process
\cite{chigansky2018can}. In that work, the initial population is assumed to be large, so that $W(k)$ in \eqref{Wk} is approximately normal. When this assumption is relaxed and the initial population size is small, the full distribution of $W$ -- and of its convolutions when $k>1$ -- becomes essential for estimating the initial population size. This further motivates the numerical approximation of the density of $W$ developed in this paper.

\subsection{Main contributions}

The main goal of this work is to provide a computational method for approximating the density function $f(x)$ of the random variable $W$. The analytical computation of $f(x)$ is only possible for a few simple models, so numerical methods are needed to address more general scenarios involving wider classes of offspring distributions. 

Our approach leverages the fact that the Laplace-Stieltjes transform of $W$, $\varphi(z):=\mathbb E[e^{-zW}]$, verifies the Poincaré functional equation
\begin{equation}\label{eq:poincare}
\begin{cases}
\varphi (mz)= P(\varphi(z)),\\
\varphi(0)=1,\quad \varphi'(0)=-1
\end{cases}
\end{equation}
for all $z$ with $\mathrm{Re}(z) \ge 0$ (where we note that $\varphi'(0)=-\mathbb E[W]$); see, e.g., Theorem 3 in~\cite[Chapter 1, Part C, Section 10]{Athreya2012}.
We focus on the case in which $P(z)$ is a polynomial of arbitrary degree $d\in\mathbb N$, corresponding to an offspring distribution with finite support. In this case, the Poincar\'e equation~\eqref{eq:poincare} holds true on the whole complex plane, has a unique solution $\varphi(z)$, and the Laplace-Stieltjes transform $\varphi(z)$ is an entire function. These properties follow from classical work of Poincaré and Valiron~\cite{Poincare1890,Valiron1954}; see also \cite{Derfel2007} for a more recent work on the topic\footnote{The result stated in~\cite[Chapter 7]{Valiron1954} applies to the slightly different boundary condition $\varphi(0) = 0$. We can recover our case~\eqref{eq:poincare} with a change of variables.}.

We first show that it is possible to recover, in theory, any number of coefficients of $\varphi(z)$ using a forward substitution method. Since this strategy sometimes results in low numerical accuracy, we also propose to approximate a truncated Taylor expansion of the Laplace-Stieltjes transform $\varphi(z)$ by means of  fixed-point functional iterations. This provides a number of moments of $W$, which we then use to fit an approximant to the density function $f(x)$. More specifically, we use a linear combination of generalized Laguerre polynomials, multiplied by an exponential damping factor, and we determine the coefficients by minimizing a moment-matching loss function. 

\subsection{Related work}

To the best of our knowledge, the numerical approximation of the distribution of the Kesten--Stigum limit $W$ for supercritical branching process has been addressed only recently in~\cite{Morris2024}. In that work, the authors consider continuous-time, multitype branching processes arising as early-stage approximations of stochastic
population models. 
Their methodology consists of two main steps:
\begin{enumerate}
\item Computing moments of $W$ via the derivatives of an analogue of the Poincar\'e equation~\eqref{eq:poincare} for continuous-time GW processes;
\item Approximating the conditional distribution of $W$ given non-extinction by fitting
a generalized Gamma distribution via a moment-matching criterion.
\end{enumerate}
While effective for the classes of models considered in~\cite{Morris2024}, this approach
comes with some limitations that we aim to address here. First, the explicit moment
recursions rely on the assumption that the progeny generating function $P(z)$ is of at most
quadratic form, whereas our method applies to polynomial $P(z)$ of arbitrary degree. Second, restricting the continuous component of the distribution of $W$
to a generalized Gamma family can be too rigid to capture certain qualitative features
of $W$, as illustrated in Section~\ref{sec:gamma}. 
On the other hand, the approach of~\cite{Morris2024} applies to multi-type GW processes, while we restrict our study to single-type processes.

\subsection{Outline}

The outline of the paper is the following. In Section~\ref{sec:poincare} we discuss how to compute or approximate the first $N+1$ terms of the Taylor expansion of $\varphi(z)$; in turn, these terms provide approximations of the first $N$ moments of $W$. In Section~\ref{sec:momentmatching} we use a moment-matching technique to obtain a linear combination of generalized Laguerre polynomials that approximates $f(x)$. Section~\ref{sec:numericalexamples} illustrates the performance of our numerical method on a variety of examples.

\section{Solving the Poincaré equation}\label{sec:poincare}
The first step towards estimating  $f(x)$ is computing an approximation of the Laplace-Stieltjes transform $\varphi(z) = \mathbb{E}[e^{-zW}]$ of the random variable $W$; our idea is to exploit the fact that $\varphi(z)$ is an entire function and solves the nonlinear functional equation with boundary conditions~\eqref{eq:poincare}.  In particular, for any $z \in \mathbb{C}$, we can express $\varphi(z)$ via its Taylor expansion around $z = 0$: 
$$\varphi(z)=\sum_{j=0}^{\infty}\varphi_j z^j.$$
In our setting, $\varphi_0=\varphi(0)=1$ and $\varphi_1=\varphi'(0)=-\mathbb E[W]=-1$, so that
$$
\varphi(z)=1 - z + \varphi_2 z^2 + \varphi_3 z^3 + \dots .
$$
Substituting this expansion into~\eqref{eq:poincare} yields an infinite system of nonlinear equations for the coefficients $\varphi_j$, $j\ge 2$.  
Moreover, there is a direct link between the coefficients $\varphi_j$ and the moments of $W$, see~\eqref{eq:phimoments} below. 

To write down the system explicitly, we leverage the fact that the ring of power series is isomorphic to that of semi-infinite lower triangular Toeplitz matrices, equipped with the usual matrix addition and multiplication. Specifically, we can identify $\varphi(z)$ with 
the semi-infinite Toeplitz matrix  $T_{\varphi}$ defined as
$$
T_{\varphi}:=\begin{bmatrix}
    \varphi_0\\
    \varphi_1&\varphi_0\\
    \varphi_2&\varphi_1&\varphi_0\\
    \vdots&\ddots&\ddots&\ddots
\end{bmatrix},
$$
and the powers $\varphi(z)^j$ with the matrix powers $T_{\varphi}^j$, for any $j\in\mathbb N$. Let us consider the case where $$P(z)=p_0+p_1z+\ldots+p_dz^d$$ is a polynomial of degree $d$, and let $e_1$ be the semi-infinite vector whose first entry is $1$ and whose remaining
entries are zero. Then the coefficients of the series expansion of $P(\varphi(z))$ can be
written as
\begin{equation}\label{eq:toeplitz-krylov}
    \begin{bmatrix} e_1 & T_{\varphi}e_1 & T_{\varphi}^2e_1 & \cdots & T_{\varphi}^de_1 \end{bmatrix} \begin{bmatrix} p_0 \\ p_1 \\ \vdots \\ p_d \end{bmatrix} =: A^{(\varphi)} p.
\end{equation}
Note that the matrix $A^{(\varphi)}$ has infinitely many rows and $d+1$ columns, and it is a  Krylov matrix generated by $T_\varphi$ and $e_1$; the vector $p$ contains the coefficients of $P(z)$. 

Finally, with a slight abuse of notation, let $\varphi$ denote the vector of Taylor
coefficients $(\varphi_j)_{j\ge0}$, and let $D$ be the semi-infinite diagonal matrix with
entries $D_{j,j}=m^j$ for $j=0,1,2,\ldots$. Then $D\varphi$ contains the coefficients
of the series expansion of $\varphi(mz)$, and the Poincar\'e equation can be rewritten in the equivalent matrix form
\begin{equation}\label{eq:infinite-fixed-point}
    D\varphi = A^{(\varphi)} p .
\end{equation}
Despite being infinite and non linear, the system~\eqref{eq:infinite-fixed-point} can be solved explicitly via a forward substitution strategy that we detail in the next section. However, the latter approach might be less accurate than iterative strategies when the degree of $P$ is moderate to high, so we also analyze two methods that approximate $\varphi(z)$ via functional fixed-point iterations expressed in terms of Taylor coefficients.  
The implementation of such iterative schemes requires truncating the  expansion of intermediate results to a finite order, and returns a finite number of Taylor coefficients $\varphi_0,\dots, \varphi_N$ of $\varphi(z)$. These methods are discrete counterparts of two functional fixed-point iterations. The first one is derived from a functional iteration that we prove to be globally convergent in the sense of Hardy space norm. The second one is obtained by applying Newton's method to \eqref{eq:poincare}, and typically exhibits a faster rate of convergence.

\subsection{A forward substitution method}\label{sec:forward}

We first observe that the nonlinear system \eqref{eq:infinite-fixed-point} has the following property: the $s$th equation depends only on $\varphi_0,\dots,\varphi_{s-1}$ and is linear in $\varphi_{s-1}$ for all $s\ge 3$. Together with the boundary conditions $\varphi_0=1$ and $\varphi_1=-1$, this allows us to recover the coefficients of $\varphi$ sequentially via a recurrence relation.

Exploiting the lower triangular Toeplitz structure of $T_{\varphi}$, we have
$$
e_s^\top A^{(\varphi)}p=e_s^\top P(T_\varphi)e_1= e_s^\top P(T_{\varphi,s})e_1,
$$
where $T_{\varphi,s}$ denotes the top principal $s\times s$ submatrix of $T_{\varphi}$. We decompose
$$
T_{\varphi,s}= \underbrace{\begin{bmatrix}
    \varphi_0\\
    \vdots&\ddots\\
    \varphi_{s-2}&\ddots&\ddots\\
    0&\varphi_{s-2}&\dots&\varphi_0
\end{bmatrix}}_{\widetilde{T}_{\varphi,s}}\quad+\quad\varphi_{s-1}\cdot \ e_s  e_1^\top,
$$
and observe that the two additive terms commute. 
In particular,  $$P(T_{\varphi,s})= P(\widetilde T_{\varphi,s}) + \varphi_{s-1}P'(\widetilde T_{\varphi,s})e_se_1^{\top} + \mathcal O((\varphi_{s-1}e_se_1^{\top})^2)=P(\widetilde T_{\varphi,s}) + \varphi_{s-1} P'(\widetilde T_{\varphi,s})e_se_1^\top,$$
where the last equality is due to the fact that $(e_se_1^{\top})^2=0$. Multiplying \eqref{eq:infinite-fixed-point} by $e_s^{\top}$ from the left gives
$$
m^{s-1}\varphi_{s-1} = e_s^\top P(\widetilde T_{\varphi,s})e_1 + \varphi_{s-1} e_s^\top P'(\widetilde T_{\varphi,s})e_s,
$$
from which we obtain
\begin{equation}\label{eq:forward}
\varphi_{s-1} =  \frac{e_s^\top P(\widetilde T_{\varphi,s})e_1}{m^{s-1}-e_s^\top P'(\widetilde T_{\varphi,s})e_s}. 
\end{equation}
The approach described above is equivalent to taking higher derivatives of the functional equations~\eqref{eq:poincare}, and evaluating at $z=0$; this leads to expressing a moment of $W$ of a certain order in terms of Bell polynomials evaluated at moments with lower orders, see~\cite[Section A.1]{hautphenne2020low}. On the other hand, writing $\varphi_s$ as a function of Toeplitz matrices comes with computational advantages.    
The vector $P(\widetilde T_{\varphi,s})e_1$  can be computed using Horner's method together
with fast Toeplitz matrix-vector multiplication, at a cost of $\mathcal{O}(d\,s\log s)$ per iteration. Note that this corresponds to computing the first $s$ coefficients of  the Taylor expansion of $P(\widetilde\varphi_{s-1}(z))$, where $\widetilde \varphi_{s-1}(z)= \sum_{j=0}^{s-2}\varphi_j z^j$. In our implementation, we use the recently proposed algorithm of \cite{kinoshita2024power}, which evaluates the coefficients of the composition with complexity $\mathcal O(\log(d)s\log s)$.  This approach significantly reduces the computational cost when the degree 
$d$ is large. Overall, computing the first $N$ coefficients of $\varphi(z)$ costs $\mathcal O(\log(d)N^2\log(N))$. 

In Section~\ref{sec:exp_newton_fixed} we will see that this approach can be slower than iterative algorithms and does not always provide the most accurate result. For this reason, we illustrate some alternatives in the next sections.

\subsection{A globally convergent functional fixed-point iteration}
The functional equation \eqref{eq:poincare} suggests considering the functional fixed-point iteration
\begin{equation}\label{eq:functional-fixed-point}
\varphi^{(k+1)}(z) = P\left(\varphi^{(k)}\left(\frac{z}{m}\right)\right),\quad k=0,1,2,\dots,
\end{equation}
starting from an initial function that satisfies the boundary conditions, for instance $\varphi^{(0)}(z)=1-z$.
A natural question is whether this iteration converges to the unique solution of \eqref{eq:poincare}. 

To prove global convergence of \eqref{eq:functional-fixed-point}, we work in a 
suitable metric space of functions. 
For $r>0$, let $H_2(r)$ denote the Hardy space  consisting of complex-valued functions that are holomorphic in the ball centered at the origin with radius $r>0$ and satisfy the boundary conditions of interest, that is,
$$
H_2(r):=\left \{
g := g(z)=\sum_{j=0}^\infty g_jz^j \,:\, g\text{ holomorphic for }|z|<r,\ g_0=1,\ g_1=-1
\right \}.
$$
It is well known that $H_2(r)$ equipped  with the $H_2(r)$-norm, defined as
$$
\|g\|_{H_2(r)}:=\sqrt{\sup_{s\in(0, r)}\frac{1}{2\pi }\int_0^{2\pi} \left\lvert g(se^{\mathbf i\theta})\right\rvert^2 \mathrm{d}\theta}= \sqrt{\sum_{j=0}^\infty r^{2j}|g_j|^2},
$$
 is a Hilbert space. 
 
 We then have the following convergence result.
\begin{theorem}\label{thm:convergence}
Let $\{\varphi^{(k)}\}_{k\in\mathbb N}$ be the sequence of functions generated by the functional iteration \eqref{eq:functional-fixed-point}  from an initial function $\varphi^{(0)}\in H_2(R)$, with $R>0$, and let $\varphi$ denote the solution of~\eqref{eq:poincare}. Then there exists $r\in(0, R)$ such that 
$$
 \lim_{k\to\infty} \norm{\varphi^{(k)} -\varphi}_{H_2(r)}=0.
$$
\end{theorem}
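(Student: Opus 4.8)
The strategy is to show that the iteration map $\Psi : g \mapsto P(g(\cdot/m))$ is a contraction on a suitably chosen closed ball inside a Hardy space $H_2(r)$ with $r$ small, and then invoke the Banach fixed-point theorem together with uniqueness of the Poincar\'e solution. The first step is to check that $\Psi$ preserves the boundary conditions: if $g\in H_2(r)$ has $g_0=1,\ g_1=-1$, then $\Psi(g)(0)=P(1)=1$ (since $P$ is a probability generating function) and $\frac{d}{dz}\Psi(g)(0)=P'(1)\cdot(-1)/m=-1$ (since $P'(1)=m$), so $\Psi(g)\in H_2(r)$ as well. The next step is to estimate, for two functions $g,h$ in a ball $B_\rho=\{g\in H_2(r):\|g-\varphi^{(0)}\|_{H_2(r)}\le \rho\}$, the distance $\|\Psi(g)-\Psi(h)\|_{H_2(r)}$. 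Writing $P(u)-P(v)=(u-v)\,\widetilde P(u,v)$ for the divided-difference polynomial $\widetilde P$, one has $\Psi(g)(z)-\Psi(h)(z)=\big(g(z/m)-h(z/m)\big)\cdot \widetilde P\big(g(z/m),h(z/m)\big)$, and the key point is that scaling by $1/m$ contracts Hardy-space norms: $\|g(\cdot/m)\|_{H_2(r)}=\|g\|_{H_2(r/m)}\le m^{-1}\cdot C$ on the relevant coefficients when $r$ is small, because the $j$-th coefficient of $g(z/m)$ is $g_j m^{-j}$, so only the constant and linear terms fail to be strictly contracted. Separating off the (fixed) first two coefficients and using that $H_2(r)$ is a Banach algebra for $r<1$ (up to a constant depending on $r$ that tends to $1$ as $r\to 0$), one obtains $\|\Psi(g)-\Psi(h)\|_{H_2(r)}\le L(r,\rho)\,\|g-h\|_{H_2(r)}$ with $L(r,\rho)\to 0$ as $r\to 0$, uniformly over the ball once $\rho$ is fixed.

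The third step is to pin down the radius. Choose $r\in(0,R)$ small enough that (i) $L(r,\rho)<1/2$ for the eventual choice of $\rho$, and (ii) $\|\Psi(\varphi^{(0)})-\varphi^{(0)}\|_{H_2(r)}\le \rho/2$; the latter is possible because $\Psi(\varphi^{(0)})-\varphi^{(0)}$ has vanishing constant and linear coefficients, so its $H_2(r)$-norm is $O(r^2)\to 0$. These two conditions, via the standard argument, make $B_\rho$ invariant under $\Psi$ and $\Psi$ a contraction on it, so $\{\varphi^{(k)}\}$ is Cauchy in $H_2(r)$ and converges to some $\varphi^\star\in B_\rho$ with $\Psi(\varphi^\star)=\varphi^\star$. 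Finally, $\varphi^\star$ is holomorphic near $0$ and satisfies $\varphi^\star(mz)=P(\varphi^\star(z))$ with $\varphi^\star(0)=1,\ (\varphi^\star)'(0)=-1$; by the uniqueness of the solution to the Poincar\'e equation (Poincar\'e--Valiron, cited above), $\varphi^\star$ agrees with $\varphi$ on a neighborhood of $0$, hence $\varphi^{(k)}\to\varphi$ in $H_2(r)$.

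I expect the main obstacle to be making the contraction estimate genuinely \emph{global} in the sense claimed, rather than merely local: one must handle the fact that the Banach-algebra constant of $H_2(r)$ and the bound on $\widetilde P$ evaluated at arbitrary elements of the ball both depend on the ball's radius $\rho$ and on $r$, and one needs to choose $r$ and $\rho$ in the right order so that the contraction factor is $<1$ while the ball is still large enough to contain the orbit starting from $\varphi^{(0)}$. The delicate bookkeeping is that $\varphi^{(0)}\in H_2(R)$ is fixed first, then $\rho$ is chosen large enough to accommodate any target within $H_2(R)$-distance, and only afterwards is $r$ shrunk; isolating the immutable first two Taylor coefficients (which never get contracted by the $1/m$ scaling) is what makes this circle of choices close. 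A secondary technical point is verifying carefully that $P$ applied coefficient-wise to a convergent power series in $H_2(r)$ again lies in $H_2(r)$ with controlled norm, which is where the $r<1$ Banach-algebra property of $H_2(r)$ is used.
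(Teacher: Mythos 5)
Your overall architecture — the map $\Psi(g):=P(g(\cdot/m))$ preserves the two prescribed Taylor coefficients, gains a factor $m^{-j}\le m^{-2}$ on every free coefficient $j\ge 2$, and is therefore contractive because $P'$ is only of size $m$ near the value $1$ — is the same mechanism as the paper's proof (which runs it as an explicit induction rather than through Banach's fixed-point theorem). However, two of the specific claims you lean on are false, and they are exactly where the work lies. First, $H_2(r)$ is \emph{not} a Banach algebra for any $r$: after rescaling the variable it is the classical Hardy space $H^2$ of the unit disk, and $H^2\cdot H^2\subseteq H^1\not\subseteq H^2$ (for instance $f=g=(1-z)^{-1/4}$ belong to $H^2$ while $fg=(1-z)^{-1/2}$ does not), so no multiplicative constant exists, let alone one tending to $1$ as $r\to 0$. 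What actually rescues the product estimate is that the factor $\widetilde P\bigl(g(z/m),h(z/m)\bigr)$ is evaluated at points $z/m$ of modulus at most $r/m<r$, where point evaluation on $H_2(r)$ is bounded, $|u(z/m)|\le K\,\|u\|_{H_2(r)}$ with $K=(1-m^{-2})^{-1/2}$; one then uses $\|vw\|_{H_2(r)}\le \sup_{|z|\le r}|w(z)|\,\|v\|_{H_2(r)}$. This is precisely the Duren-type lemma (the paper's constant $K_m$) that your sketch omits; without it your Lipschitz bound has no justification.

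Second, the claim that $L(r,\rho)\to 0$ as $r\to 0$ uniformly on a fixed ball, together with the ordering ``choose $\rho$ large enough \dots\ only afterwards shrink $r$,'' does not work. Shrinking $r$ does not shrink the set of values $g(z/m)$ for $g$ in the ball $B_\rho\subset H_2(r)$, because the ball itself changes with $r$: the function $g=\varphi^{(0)}+\rho z^2/r^2$ has $\|g-\varphi^{(0)}\|_{H_2(r)}=\rho$ and satisfies $|g(z/m)-\varphi^{(0)}(z/m)|=\rho/m^2$ at $|z|=r$ for every $r$. Hence the best Lipschitz constant you can get is roughly $\sup\{|\widetilde P(u,v)|:u,v\in B(1,c\rho)\}\cdot m^{-2}$, which does not tend to $0$ with $r$ and exceeds $1$ once $\rho$ is large, since $\widetilde P$ grows polynomially away from $1$. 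The choices must be made in the opposite order, as in the paper: first fix $\rho$ small enough that $|P'|$ (equivalently $\widetilde P$) stays strictly below $m^2$ on a neighbourhood of $1$ — possible because $P'(1)=m<m^2$ — and only then shrink $r$ so that $\|\Psi(\varphi^{(0)})-\varphi^{(0)}\|_{H_2(r)}\le (1-L)\rho$, which is indeed $O(r^2)$. You also do not need the ball to ``accommodate any target'': invariance of the ball suffices, and the identification of the resulting fixed point with $\varphi$ via uniqueness of the Poincar\'e solution (your final step) is fine. With these two repairs your contraction argument closes and becomes an abstract repackaging of the paper's induction.
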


 \begin{proof}
Since $\varphi$ is entire, we have $\varphi\in H_2(r)$ for all $r>0$.
Choose $\rho>0$ such that $|P'(y)|\le m^2$ for all $y\in B(1, \rho)$, where $B(1, \rho)$ denotes the ball of center $1$ and radius $\rho$. Let $K_m := \sqrt{2m/(m-1)}$ and fix $0 < \delta < \rho / (2K_m)$. 

We now choose $r \in (0,\min\{1, R\})$ such that 
\begin{equation*}
    \varphi\left ( B \left ( 0, \frac{r}{m} \right ) \right ) \subseteq B\left ( 1, \frac{\rho}{2} \right ) \text{ and } \| \varphi^{(0)} - \varphi\|_{H_2(r)} < \delta.
\end{equation*}
Such a choice is always possible: for any fixed $\varphi^{(0)}\in H_2(R)$, the norm
$\|\varphi^{(0)}-\varphi\|_{H_2(r)}$ is nondecreasing in $r$ and tends to $0$ as $r\to0$.

We show by induction that 
\begin{equation}\label{eq:choice_r}
\varphi^{(k)} \in H_2(R) \text{ and } \|\varphi^{(k)} - \varphi\|_{H_2(r)} < \delta r^{2k}\qquad\textrm{for all $k \ge 0$,}
\end{equation}
 which proves the convergence of the iterates $\{\varphi^{(k)}\}_{k \in \mathbb{N}}$ to $\varphi$ in the $H_2(r)$-norm. We have chosen $r$ in such a way that the base step, with $k=0$, is true. 

Let us now consider the inductive step and assume~\eqref{eq:choice_r} holds for some $k\ge0$. Since $\varphi^{(k)} \in H_2(R)$, we have that $\varphi^{(k+1)} = P(\varphi^{(k)}(\cdot / m))$ is holomorphic in $B(0,R)$. Moreover, 
\begin{align*}
\varphi^{(k+1)}(0) &= P(\varphi^{(k)}(0))=P(1)=1,\\
\left(\varphi^{(k+1)}\right)'(0) &= \frac{1}{m}\cdot P'(\varphi^{(k)}(0))\cdot \left(\varphi^{(k)}\right)'(0)=\frac{1}{m} \cdot m \cdot (-1) =-1.
\end{align*}
This implies that $\varphi^{(k+1)}\in H_2(R)$.

Let $z$ be any point in $B(0, r)$; we can write 
\begin{align}
\left| \varphi(z) - \varphi^{(k+1)}(z) \right | & =  \left|P\left(\varphi\left(\frac zm\right)\right)-P\left(\varphi^{(k)}\left(\frac z m\right)\right)\right| \nonumber \\
& \le \max_{\zeta(z)} |P'\left(\zeta(z)\right)|\cdot \left|\varphi\left(\frac zm\right)- \varphi^{(k)}\left(\frac zm\right)\right|,\label{eq:lagrange}
\end{align}
where $\zeta(z)$ is a point on the segment connecting $\varphi\left(z/m\right)$ and $  \varphi^{(k)}\left(z/m\right)$, by the mean value theorem. 
Thanks to the choice of $r$ in~\eqref{eq:choice_r}, we have that $\varphi(z/m) \in B(1, \rho/2)$. Moreover, we have
\begin{equation*}
\left| \varphi\left (\frac{z}{m} \right ) - \varphi^{(k)}\left (\frac{z}{m}\right )\right| \le K_m \|\varphi - \varphi^{(k)}\|_{H_2(r)} \le \delta r^{2k} K_m < \rho/2,
\end{equation*}
where the first inequality\footnote{More precisely, we apply the lemma at page 36 in~\cite{duren1970theory} to the function $f(y) := \varphi(yr) - \varphi^{(k)}(yr)$, which satisfies $\|f\|_{H_2(1)} = \|\varphi - \varphi^{(k)}\|_{H_2(r)}$, at the point $\tilde y := y/m$, for $y \in B(0,1)$.} follows from~\cite[Chapter 3, page 36]{duren1970theory}, the second is the inductive hypothesis and the third follows from our choice of $\delta$. Therefore, both $\varphi(z/m)$ and $\varphi^{(k)}(z/m)$ lie in $B(1,\rho)$, and so does the segment that connects these two points. This means that $|P'(\zeta(z))| < m^2$ and using~\eqref{eq:lagrange} we get
\begin{align*}
\left\lVert \varphi - \varphi^{(k+1)} \right \rVert_{H_2(r)} & \le m^2 \left\lVert \varphi\left(\frac \cdot m\right)- \varphi^{(k)}\left(\frac \cdot m\right)\right\rVert_{H_2(r)} = m^2 \sqrt{\sum_{j=2}^{\infty}\frac{r^{2j}}{m^{2j}}|\varphi_{j}-\varphi^{(k)}_j|^2}\\
& \le r^2 \sqrt{\sum_{j=2}^\infty |\varphi_j - \varphi^{(k)}_j|^2} = r^2 \norm{\varphi\left(\cdot \right)- \varphi^{(k)}\left(\cdot \right)}_{H_2(r)} < r^{2k+2}\delta,
\end{align*}
where we used $r < 1$ in the inequalities, and this concludes the proof.
\end{proof}

Theorem~\ref{thm:convergence} shows that any initial function that is analytic in a neighbourhood of $0$ and satisfies the boundary conditions is a valid starting point for the iteration. However, some choices of $\varphi^{(0)}$ may lead to faster convergence and thus reduce the number of iterations required in practice.  In our numerical experiments, the choice $\varphi^{(0)}(z) = 1-z$ performs consistently well; see also the results reported in Section~\ref{sec:exp_newton_fixed}.

\subsection{Discretized fixed-point iteration}
Since dealing with an infinite number of Taylor coefficients all-at-once is numerically infeasible, we describe here how to reduce the functional fixed-point iteration to a finite-dimensional
problem.  By applying the inverse of $D$ to \eqref{eq:infinite-fixed-point} we get the fixed-point equation
\begin{equation}\label{eq:infinite-fixed-point2}
   \varphi = D^{-1} A^{(\varphi)} p,
\end{equation}
that corresponds, at the continuous level, to considering the equivalent equation $\varphi(z)= P(\varphi\left(\frac zm\right))$.
To obtain a practical computational procedure, we propose to truncate the infinite-dimensional fixed-point equation~\eqref{eq:infinite-fixed-point2} to a finite-dimensional system.
More precisely, for a given positive integer $N\ge d$, we denote by $D_N$ the truncation of $D$ to its leading $(N+1) \times (N+1)$ submatrix, by $A^{(\varphi)}_N$ the truncation of $A^{(\varphi)}$ to its first $N+1$ rows, and we consider the finite-dimensional vector iterative scheme
\begin{equation}\label{eq:fix-point-update}
     \varphi^{(k+1)} = D_N^{-1} A^{(\varphi^{(k)})}_N p \qquad \text{for } k\ge 0,
\end{equation}
where $\varphi^{(0)}(z)$ is chosen in the set of polynomials of degree bounded by $N$, so that $\varphi^{(0)}\in\mathbb R^{N+1}$. 

In addition, to preserve the boundary conditions, we perform the update~\eqref{eq:fix-point-update} only on the entries $\varphi^{(k+1)}_j$ for $j\ge 2$, and we set $\varphi_0^{(k+1)}=1$, and $\varphi_1^{(k+1)}=-1$. 

As a stopping criterion, we check whether the infinity norm of the relative residual 
$$
\mathrm{Res}(\varphi^{(k)}) := \max_{j\in\{0,\dots, N\}} \left|1 -\frac{m^j\varphi^{(k)}_j}{(A^{\varphi^{(k)}}p)_j}\right|,
$$
is below a given threshold $\varepsilon_\mathrm{tol}$. The procedure is summarized in Algorithm~\ref{alg:fixed-point}. 

\begin{algorithm}
  \caption{Fixed-point iteration for $\varphi(z)-P(\varphi\left(\frac zm\right))=0$}\label{alg:fixed-point}
  \begin{algorithmic}
    \Procedure{FixedPointPoincare}{$\varphi^{(0)}, p, N, \varepsilon_{\mathrm{tol}}$}
    \State $m\gets \sum_{j=1}^d j \cdot p_j$ 
    \State $D_N= \mathrm{diag}(1, m,\dots, m^{N})$
    \For{$k = 1, 2,3, \ldots $}
    \State $\widetilde{\varphi}^{(k)} \gets A_N^{\varphi^{(k-1)}}p$    
    \State $\varphi^{(k)}\gets D_N^{-1}\widetilde{\varphi}^{(k)} $
    \State $\varphi_0^{(k)}\gets 1,\quad \varphi_1^{(k)}\gets -1$
    \If{$\mathrm{Res(\varphi^{(k)})}\le \varepsilon_\mathrm{tol}$}
    \State \textbf{break}
    \EndIf
    \EndFor
    \State \Return $\varphi^{(k)}$
    \EndProcedure
  \end{algorithmic}
\end{algorithm}
Finally, we remark that the computational complexity of each iteration in Algorithm~\ref{alg:fixed-point} is dominated by the cost of evaluating $A^{(\varphi)}_N p$. This operation is equivalent to computing $P(T_{\varphi,N})e_1$, where $T_{\varphi, N}$ denotes the truncation of $T_{\varphi}$ to its first $N+1$ rows and columns, and $P(T_{\varphi,N}) = \sum_{j=0}^d p_j T_{\varphi,N}^j$. Similarly to section~\ref{sec:forward}, in place of evaluating $P(T_{\varphi,N})e_1$ we directly compute  the first $N+1$ coefficients in the Taylor expansion of $P(\varphi(z))$ via the algorithm in \cite{kinoshita2024power}, which requires $\mathcal O(\log(d)N\log N)$ arithmetic operations for each iteration.

\subsection{Newton's method}
As a second iterative method for solving \eqref{eq:poincare}, we consider
Newton's iteration applied to the functional equation
$$
\varphi(mz) -P(\varphi(z))=0,
$$
written in terms of the Taylor coefficients of $\varphi(z)$ and $P(z)$. The following lemma provides an explicit expression for the corresponding Jacobian.

\begin{lemma}\label{lem:jacobian}
Let $\varphi(z)$ be a formal power series, and let $A^{(\varphi)}$ be defined as in~\eqref{eq:toeplitz-krylov}. Define the vector $y \in \mathbb{R}^{d+1}$ by $y:= \begin{bmatrix}
    p_1&   2p_2&   \cdots &    d p_d &    0
\end{bmatrix}^\top.$
Then the Jacobian matrix $J(\varphi)$ , whose entries are given by $J(\varphi)_{i+1, s+1} = \frac{\partial}{\partial \varphi_s}\left(\varphi(mz) -P(\varphi(z))\right)_i$, can be written as
\begin{equation}\label{eq:J}
J(\varphi):= \begin{bmatrix}
    1\\ &m\\&&m^2\\&&&\ddots
\end{bmatrix}-\begin{bmatrix}
    \gamma_1\\
    \gamma_2&\gamma_1\\
    \gamma_3&\gamma_2&\gamma_1\\
    \vdots &\ddots&\ddots&\ddots
\end{bmatrix},
\end{equation}
where $\gamma_j$ denotes the inner product of the $j$th row of $A^{(\varphi)}$ with
the vector $y$.
\end{lemma}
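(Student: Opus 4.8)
The plan is to differentiate the residual map $\Phi(\varphi) := D\varphi - A^{(\varphi)}p$ entrywise with respect to the Taylor coefficients $\varphi_s$, using the Toeplitz/power-series dictionary already set up in the paper. The linear term $D\varphi$ contributes the diagonal matrix $\mathrm{diag}(1,m,m^2,\dots)$, since $\partial (D\varphi)_i/\partial\varphi_s = m^i\,\delta_{is}$. So the whole content of the lemma is to show that the Jacobian of $\varphi\mapsto A^{(\varphi)}p = P(\varphi(z))$, viewed as a map on Taylor-coefficient vectors, is the lower-triangular Toeplitz matrix with first column $(\gamma_1,\gamma_2,\gamma_3,\dots)^\top$, where $\gamma_j = (\text{$j$th row of }A^{(\varphi)})\cdot y$.

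First I would argue at the level of power series: if $\varphi(z)$ is perturbed to $\varphi(z) + \varepsilon\, h(z)$ with $h(z) = \sum_s h_s z^s$, then $P(\varphi(z)+\varepsilon h(z)) = P(\varphi(z)) + \varepsilon\, P'(\varphi(z))\, h(z) + \mathcal{O}(\varepsilon^2)$. Hence the directional derivative is multiplication (of power series) by $P'(\varphi(z))$. Translating into Toeplitz-matrix language, this map is $h \mapsto T_{P'(\varphi)}\, h$, i.e. the Jacobian of $A^{(\varphi)}p$ is the semi-infinite lower-triangular Toeplitz matrix $T_{P'(\varphi)}$ whose symbol is the power series $P'(\varphi(z)) = \sum_{j\ge 0} \gamma_{j+1} z^{j}$. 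It then remains to identify the Taylor coefficients of $P'(\varphi(z))$ with the claimed inner products $\gamma_j$. Writing $P'(z) = p_1 + 2p_2 z + \dots + d p_d z^{d-1}$, we have $P'(\varphi(z)) = \sum_{\ell=1}^{d} \ell\, p_\ell\, \varphi(z)^{\ell-1} = y_1 \varphi(z)^0 + y_2 \varphi(z)^1 + \dots + y_{d+1}\varphi(z)^{d}$ with the padding convention $y_{d+1}=0$ built into the definition of $y$; in Toeplitz form this is $\sum_{\ell=0}^{d} y_{\ell+1} T_\varphi^\ell e_1$ for the first column, which is exactly $A^{(\varphi)} y$. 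Reading off row $j$ of this vector gives $\gamma_j = e_j^\top A^{(\varphi)} y = (\text{$j$th row of }A^{(\varphi)})\cdot y$, as required, and Toeplitz structure of the Jacobian of a multiplication operator finishes the identification with \eqref{eq:J}.

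Alternatively, and perhaps cleaner for a fully explicit verification, I would compute $\partial/\partial\varphi_s$ of $(A^{(\varphi)}p)_i = \sum_{\ell=0}^d p_\ell (T_\varphi^\ell e_1)_i$ directly: differentiating $T_\varphi^\ell$ in $\varphi_s$ gives $\sum_{a+b=\ell-1} T_\varphi^a S_s T_\varphi^b$, where $S_s = \partial T_\varphi/\partial\varphi_s$ is the shift-by-$s$ matrix (ones on the $s$th subdiagonal); using that all these matrices are lower-triangular Toeplitz and hence commute, this collapses to $\ell\, T_\varphi^{\ell-1} S_s$, and applying it to $e_1$ and pairing with $p_\ell$ reconstructs $T_{P'(\varphi)} S_s e_1$, whose $i$th entry is the $(i-s)$th coefficient of $P'(\varphi(z))$ — confirming both the Toeplitz pattern and the value $\gamma_{i-s}$ of the entries.

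The main obstacle is not conceptual but bookkeeping: keeping the index shifts consistent (the $1$-based matrix indices $i+1, s+1$ in the statement versus $0$-based Taylor exponents), and justifying that the formal manipulations with semi-infinite matrices are legitimate — here this is harmless because every row of every matrix involved has only finitely many nonzero entries, so all products are well-defined row by row and the ``derivative of a power series is multiplication by $P'(\varphi)$'' identity holds as a genuine (not merely formal) identity once $\varphi$ is the analytic solution. I would state this finiteness remark once and then carry out the computation in the power-series picture, which is the shortest route.
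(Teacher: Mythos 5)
Your proposal is correct, and your primary argument takes a slightly different (more functional) route than the paper, while your ``alternative'' computation is essentially the paper's own proof. The paper works entrywise: it differentiates the matrix powers $T_\varphi^{j-1}$ with respect to $\varphi_s$ by writing the perturbation as $\varepsilon Z_s$ on the $s$th subdiagonal, uses commutativity of lower triangular Toeplitz matrices to collapse the derivative to $(j-1)T_\varphi^{j-2}Z_s$, and then recognizes $e_{i+1}^\top T_\varphi^{j-2} e_{s+1}$ as the $(i-s+1,j-1)$ entry of $A^{(\varphi)}$, so that the off-diagonal part of row $i+1$ is the inner product of a row of $A^{(\varphi)}$ with $y$ --- exactly your second sketch. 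Your main route instead linearizes at the power-series level, $P(\varphi+\varepsilon h)=P(\varphi)+\varepsilon P'(\varphi)h+\mathcal O(\varepsilon^2)$, so that the Jacobian of $\varphi\mapsto P(\varphi(z))$ is the lower triangular Toeplitz operator with symbol $P'(\varphi(z))$, and then identifies the coefficient vector of $P'(\varphi(z))=\sum_{\ell=1}^d \ell p_\ell \varphi(z)^{\ell-1}$ as $A^{(\varphi)}y$, which immediately gives $\gamma_j=e_j^\top A^{(\varphi)}y$. This version makes the Toeplitz structure of the Jacobian transparent from the outset and confines the index bookkeeping to a single coefficient identification, while the paper's version stays entirely within the explicit matrix formalism used throughout Section~2 (and is thus self-contained at the level of matrix entries). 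Both arguments are valid for formal power series, since all matrices involved are lower triangular so every entry of every product is a finite sum --- the finiteness remark you make at the end is the right justification, and no appeal to analyticity of $\varphi$ is needed. The only blemish is a cosmetic one: in your second sketch the label $\gamma_{i-s}$ versus $\gamma_{i-s+1}$ depends on whether the entry index is taken $0$- or $1$-based, which is precisely the bookkeeping issue you flag; as written (with $1$-based entries of $T_{P'(\varphi)}S_s e_1$) it is consistent with \eqref{eq:J}.
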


\begin{proof}
Fix $s\in\{ 0,1,2,\ldots\}$. For each $i\ge 0$,
\begin{align*}
\frac{\partial}{\partial \varphi_s}\left(\varphi(mz) -P(\varphi(z))\right)_i &= m^{i}\delta_{is} - \sum_{j=1}^{d+1}\frac{\partial a_{ij}^{(\varphi)}}{\partial \varphi_s} p_{j-1}
= m^{i}\delta_{is} - \sum_{j=2}^{ d+1}\frac{\partial (e_i^\top T_{\varphi}^{j-1}e_1)}{\partial \varphi_s} p_{j-1}\\
& = m^{i}\delta_{is} - \sum_{j=2}^{d+1}e_i^\top\frac{\partial (T_{\varphi}^{j-1})}{\partial \varphi_s}e_1 p_{j-1},
\end{align*}
where $\delta_{is}$ denotes the Kronecker delta. Observe that adding $\varepsilon$ to $\varphi_s$  means adding $\varepsilon$ to the $s$-th sub-diagonal of $T_{\varphi}$, and 
$$
(T_{\varphi} + \varepsilon Z_s)^{j-1} - T_{\varphi}^{j-1}= \varepsilon(j-1)T_{\varphi}^{j-2}Z_s+ \mathcal O(\varepsilon ^2), \text{ where } Z_s= \begin{bmatrix}
&\\
    &\\
    1\\
    &1\\
    &&\ddots
\end{bmatrix}
$$
has zeros in the first $s$ rows. This implies 
$$\frac{\partial}{\partial \varphi_s}\left(\varphi(mz) -P(\varphi(z))\right)_i = m^i\delta_{is} - \sum_{j=2}^{d+1}e_{i+1}^\top T_{\varphi}^{j-2}e_{s+1} (j-1)p_{j-1}.$$
The quantity $e_{i+1}^\top T_{\varphi}^{j-2}e_{s+1}$ represents the coefficient of $z^{i-s}$ in the series expansion of $\varphi(z)^{j-2}$, i.e., it is $0$ when $s>i$, and corresponds to the entry $(i-s+1,j-1)$ of $A^{(\varphi)}$ otherwise. Therefore, for $i\ge s$, the quantity $\left(\frac{\partial P(\varphi(z))}{\partial \varphi_s}\right)_i$ is equal to the product between the $(i-s+1)$-th row of $A^{(\varphi)}$ and the vector $y$. Note that the $(i+1,s+1)$-th entry of the Jacobian matrix $J(\varphi)$ is $m^{i}\delta_{is}-\frac{\partial \left(P(\varphi(z))\right)_i}{\partial\varphi_s} $, and this implies~\eqref{eq:J}.
\end{proof}

We remark that the first row of $T_{\varphi}^j$ is $e_1^\top$ for all $j$, so the first row of $A^{(\varphi)}$ is $[1, 1, \ldots, 1]$, therefore $\gamma_1 = p_1 + 2p_2 + 3p_3 + \ldots + dp_d = m $. This means that the Jacobian matrix is rank-deficient, as its first two rows are linearly dependent. On the other hand, the first two rows of $J(\varphi)$ only involve the first two Taylor coefficients of $\varphi^{(k)}(z)$, which are fixed by the boundary conditions of~\eqref{eq:poincare}. To overcome this technical difficulty, we rewrite the updating process for all coefficients but $\varphi^{(k)}_0,\varphi^{(k)}_1$. Let us introduce 
\begin{align*}
\widetilde{\varphi}^{(k)}&=\begin{bmatrix}
    \varphi^{(k)}_2\\
    \varphi^{(k)}_3\\
    \vdots
\end{bmatrix},\quad 
\widetilde D = \begin{bmatrix}
    m^2\\
    &m^3\\
    &&\ddots
\end{bmatrix},
\quad w^{(k)}= \begin{bmatrix}
(A^{\varphi^{(k)}}p)_2   \\
(A^{\varphi^{(k)}}p)_3\\
\vdots
\end{bmatrix},\\
\widetilde J(\varphi)&= 
\begin{bmatrix}
    m^2\\ &m^3\\&&m^4\\&&&\ddots
\end{bmatrix}-\begin{bmatrix}
    \gamma_1\\
    \gamma_2&\gamma_1\\
    \gamma_3&\gamma_2&\gamma_1\\
    \vdots &\ddots&\ddots&\ddots
\end{bmatrix}.
\end{align*}
Then, the Newton iteration reads
\begin{equation}\label{eq:newton-scheme}
\begin{cases}\widetilde{\varphi}^{(k+1)} = \widetilde{\varphi}^{(k)} - \widetilde{J}(\varphi^{(k)})^{-1}(\widetilde D\widetilde{\varphi}^{(k)}-w^{(k)})\\
\varphi^{(k+1)}_0=1,\ \varphi^{(k+1)}_1=-1
\end{cases}, \qquad k\ge 0.
\end{equation}
Once again, the practical implementation of  Newton's method in this context is obtained by truncating the semi-infinite
objects in~\eqref{eq:newton-scheme} to size $N$. The resulting procedure is reported in
Algorithm~\ref{alg:newton}.

\begin{algorithm}
  \caption{Newton's method for $\varphi(mz)-P(\varphi(z))=0$}\label{alg:newton}
  \begin{algorithmic}
    \Procedure{NewtonPoincare}{$\varphi^{(0)}, p, N, \varepsilon_{\mathrm{tol}}$}
    \State $m\gets \sum_{j=1}^d j \cdot p_j$ 
    \State $\widetilde{D}_N= \mathrm{diag}(m^2, m^3,\dots, m^{N})$
    \State $y=[p_1, 2p_2,\dots, dp_d,0, \dots, 0]^{\top}\in\mathbb R^{N+1}$
    \For{$k = 1, 2,3, \ldots $}
    \State $\gamma = A_N^{\varphi^{(k-1)}} y$ 
    \State $w^{(k)} \gets A_N^{\varphi^{(k-1)}}p$    
    \State $w^{(k)} \gets [w^{(k)}_2,\dots,w^{(k)}_N]^{\top}$
    \State $\widetilde{J}_N(\varphi^{(k)})\gets \left[\begin{smallmatrix}
    m^2\\ &m^3\\&&\ddots\\&&&m^N
\end{smallmatrix}\right]-\left[\begin{smallmatrix}
    \gamma_1\\
    \gamma_2&\gamma_1\\
    \vdots&\ddots&\ddots\\
    \gamma_{N-1} &\dots&\dots&\gamma_1
\end{smallmatrix}\right]$
\State $\widetilde{\varphi}^{(k-1)}\gets [\widetilde{\varphi}^{(k-1)}_2,\dots, \widetilde{\varphi}^{(k-1)}_N]^{\top}$
\State $[\varphi_2^{(k)},\dots, \varphi_N^{(k)}]^{\top} \gets \widetilde{\varphi}_{(k-1)}-\widetilde{J}_N(\varphi^{(k-1)})^{-1}(\widetilde D_N\widetilde\varphi^{(k-1)}-w^{(k)})$
    \State $\varphi_0^{(k)}\gets 1,\quad \varphi_1^{(k)}\gets -1$
    \If{$\mathrm{Res(\varphi^{(k)})}\le \varepsilon_{\mathrm{tol}}$}
    \State \textbf{break}
    \EndIf
    \EndFor
    \State \Return $\varphi^{(k)}$
    \EndProcedure
  \end{algorithmic}
\end{algorithm}

Each iteration of Algorithm~\ref{alg:newton} requires computing the first $N+1$ coefficients of the composition of two polynomials (using the algorithm of \cite{kinoshita2024power}), as well as solving a lower triangular linear system of size $N-1$. Since all remaining operations have lower complexity, the overall cost
of a single iteration is $\mathcal O(N^2+\log(d)\,N\log N)$.

\begin{remark}\label{rem:global-conv}
Proceeding as in the proof of Lemma~\ref{lem:jacobian}, we find that the Jacobian of the finite-dimensional fixed-point iteration~\eqref{eq:fix-point-update} is given by
$$
\begin{bmatrix}
    m^2\\ &m^3\\&&\ddots\\&&&m^N
\end{bmatrix}^{-1}\begin{bmatrix}
    \gamma_1\\
    \gamma_2&\gamma_1\\
        \vdots &\ddots&\ddots&\\
    \gamma_{N-1}&\dots&\dots&\gamma_1
\end{bmatrix}= \begin{bmatrix}
    m^{-1}\\
    m^{-3}\gamma_2&m^{-2}\\
        \vdots &\ddots&\ddots&\\
    m^{-N}\gamma_{N-1}&\dots&m^{-N}\gamma_2&m^{1-N}
\end{bmatrix}.
$$
This matrix is lower triangular, with diagonal entries bounded by $1/m<1$, so its
spectral radius equals $1/m$. This shows that the truncated fixed-point map is a
contraction, and therefore that Algorithm~\ref{alg:fixed-point} converges for any
initial vector satisfying $\varphi^{(0)}_0=1$ and $\varphi^{(0)}_1=-1$.
\end{remark}

\subsection{Numerical examples}\label{sec:exp_newton_fixed}
In this section, we compare the performances of Algorithm~\ref{alg:fixed-point}, Algorithm~\ref{alg:newton}, and the forward substitution formula~\eqref{eq:forward} on two numerical case studies.
\begin{example}
In this first numerical test we consider the offspring distribution
$$
P(z)=\frac{1}{10}(1+z+z^2+\ldots+z^9)
$$
and we study the convergence rate of the two proposed methods. For both methods, we set $N=100$, corresponding to a Taylor approximation of $\varphi(z)$ of degree $100$, and we use the initial function $\varphi^{(0)}(z)=1-z$. 

Figure~\ref{fig:test1_paper} shows the evolution of the relative residual
$\mathrm{Res}(\varphi^{(k)})$ throughout the iterations. Newton's method converges superlinearly and reaches machine precision after only four iterations. 
In contrast, the fixed-point iteration exhibits linear convergence and requires $27$ iterations to achieve a comparable level of accuracy.

 After $27$ iterations, the relative infinity-norm difference between the two approximate solutions, 
$$
\max_{j\in\{0,\dots,N\}}\left|1-\frac{\varphi^{(\mathrm{Newton})}_j}{\varphi_j^{(\mathrm{Fixed})}}\right|,
$$
is of the order of $10^{-15}$. This indicates that, up to numerical precision, both
methods converge to the same fixed point.
\end{example}
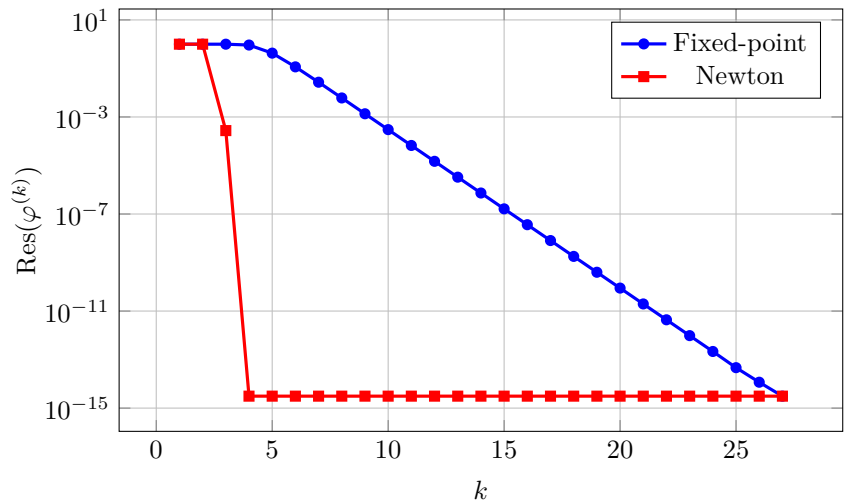
\begin{figure}[h]
\centering
\begin{tikzpicture}
  \begin{semilogyaxis}[
    xlabel={$k$},
    ylabel={$\mathrm{Res}(\varphi^{(k)})$},
    legend pos=north east,
    grid=major,
    width=.7\textwidth,
    height=0.45\textwidth
  ]  
  \addplot[mark=*, very thick, blue, mark size=1.5pt] table [x index=0, y index=1, col sep=space] {test1_err2.dat};
  \addlegendentry{Fixed-point}
  \addplot[mark=square*, very thick, red, mark size=1.5pt] table [x index=0, y index=2, col sep=space] {test1_err2.dat};
  \addlegendentry{Newton}
  \end{semilogyaxis}
\end{tikzpicture}
\caption{Relative residual $\mathrm{Res}(\varphi^{(k)})$ along the iterations of the fixed-point method and Newton's method when solving \eqref{eq:poincare} for $P(z)=\frac{1}{10}(1+z+z^2+\dots+z^9)$. }
\label{fig:test1_paper}
\end{figure}

\begin{example}
We now investigate the influence of the degree $d$ of $P(z)$ and of the mean offspring number $m$ on the performances of the two iterative methods, and of the direct solver based on the formula~\eqref{eq:forward}. The experiment is run for $d\in\{5,10,15,20,50\}$ and $m\in\{1.1,1.25,2,3\}$. For each configuration $(d,m)$ we generate $100$ offspring distributions at random and we execute both Algorithm~\ref{alg:fixed-point} and Algorithm~\ref{alg:newton} with input arguments $N=100$, $\varepsilon_{\mathrm{tol}}=10^{-8}$, and $\varphi^{(0)}(z)=1-z$.  

The average execution times, numbers of iterations, and final relative residuals are reported in Table~\ref{tab:exp2}. We observe that, as $m$ approaches $1$, the fixed-point method requires significantly more iterations than Newton's method. This is expected as the spectral radius of the Jacobian of the fixed-point iteration is $\frac{1}{m}$; see Remark~\ref{rem:global-conv}. As a result, Newton's method is faster than the fixed-point iteration by a factor between $10$ and $30$ when $m\in\{1.1,1.25\}$. For larger values of $m$, the execution times of the two methods become comparable.  

The execution time of the forward substitution approach is not affected by the parameter $m$; the values reported in Table~\ref{tab:exp2} show that this approach is always slower than Newton's method, and is comparable to or faster than the fixed-point iteration only when the latter requires $\mathcal O(N)$ iterations, i.e., when $m\in\{1.1, 1.25\}$. 

The degree $d$ has only a minor impact on the computational costs, in agreement with the complexity analysis discussed earlier. However, the latter plays a major role when we look at the accuracy of the forward substitution method. The latter deteriorates as $d$ increases, providing residuals of the order $10^{-4}$ when $d=50$. Instead, the accuracy of the iterative methods are less affected by $d$. The fixed-point iteration
typically stops with a residual close to the prescribed tolerance $10^{-8}$, reflecting its linear convergence. Newton’s method often attains residuals well below
$\varepsilon_{\mathrm{tol}}$, empirically exhibiting superlinear convergence.
\end{example}
\begin{table}
\caption{Execution time (in seconds), number of iterations, and final residual for the fixed-point iteration \eqref{eq:fix-point-update} (left), Newton's method \eqref{eq:newton-scheme} (right), and the forward substitution \eqref{eq:forward}, when solving \eqref{eq:poincare}. Several values of the maximum number of offspring $d$ and of the mean offspring number $m$ are considered. For each pair $(d,m)$, the offspring distribution $P(z)$ is generated at random, and the reported results are averaged over $100$ runs.}\label{tab:exp2}
\centering
\resizebox{\textwidth}{!}{
\begin{tabular}{|c|c|c|c|c|c|}
\hline
\multicolumn{6}{|c|}{\textbf{Fixed-point}}\\
\hline
\multicolumn{6}{|c|}{Time}\\
\hline
   \diagbox[width=\dimexpr \textwidth/20+2\tabcolsep\relax, height=.75cm]{d}{m}  & $1.1$ & $1.25$ & $1.5$ & $2$ & $3$ \\
   \hline
$5$ & $0.111$ & $0.057$ & $0.033$ & $0.020$ & $0.013$ \\
$10$ & $0.145$ & $0.065$ & $0.037$ & $0.023$ & $0.015$ \\
$15$ & $0.150$ & $0.068$ & $0.039$ & $0.024$ & $0.016$ \\
$20$ & $0.172$ & $0.077$ & $0.044$ & $0.027$ & $0.018$ \\
$50$ & $0.254$ & $0.114$ & $0.065$ & $0.040$ & $0.026$ \\

\hline
\multicolumn{6}{|c|}{Iterations}\\
\hline
\diagbox[width=\dimexpr \textwidth/20+2\tabcolsep\relax, height=.75cm]{d}{m}& $1.1$ & $1.25$ & $1.5$ & $2$ & $3$ \\
\hline
$5$ & $220.0$ & $99.8$ & $58.0$ & $36.0$ & $24.3$ \\
$10$ & $219.0$ & $99.0$ & $57.0$ & $35.0$ & $24.0$ \\
$15$ & $219.0$ & $99.0$ & $57.0$ & $35.0$ & $23.4$ \\
$20$ & $219.0$ & $99.0$ & $57.0$ & $35.0$ & $23.0$ \\
$50$ & $219.0$ & $99.0$ & $57.0$ & $35.0$ & $23.0$ \\
 \hline 
\multicolumn{6}{|c|}{Residual}\\
\hline
\diagbox[width=\dimexpr \textwidth/20+2\tabcolsep\relax, height=.75cm]{d}{m}& $1.1$ & $1.25$ & $1.5$ & $2$ & $3$ \\
\hline
$5$ & $9\cdot 10^{-9}$ & $8\cdot 10^{-9}$ & $8 \cdot 10^{-9}$ & $7\cdot 10^{-9}$ & $7\cdot 10^{-9}$ \\
$10$ & $1\cdot 10^{-8}$ & $9\cdot 10^{-9}$ & $9\cdot 10^{-9}$ & $9\cdot 10^{-9}$ & $4\cdot 10^{-9}$ \\
$15$ & $1 \cdot 10^{-8}$ & $9\cdot 10^{-9}$ & $8\cdot 10^{-9}$ & $8\cdot 10^{-9}$ & $7\cdot 10^{-9}$ \\
$20$ & $1\cdot 10^{-8}$ & $8\cdot 10^{-9}$ & $8\cdot 10^{-9}$ & $7\cdot 10^{-9}$ & $9\cdot 10^{-9}$ \\
$50$ & $9\cdot 10^{-9}$ & $8\cdot 10^{-9}$ & $7\cdot 10^{-9}$ & $6\cdot 10^{-9}$ & $7\cdot 10^{-9}$ \\
  \hline
\end{tabular}~
\begin{tabular}{|c|c|c|c|c|c|}
\hline
\multicolumn{6}{|c|}{\textbf{Newton}}\\
\hline
\multicolumn{6}{|c|}{Time}\\
\hline
   \diagbox[width=\dimexpr \textwidth/20+2\tabcolsep\relax, height=.75cm]{d}{m}  & $1.1$ & $1.25$ & $1.5$ & $2$ & $3$ \\
   \hline
$5$ & $0.006$ & $0.006$ & $0.007$ & $0.006$ & $0.005$ \\
$10$ & $0.008$ & $0.008$ & $0.009$ & $0.007$ & $0.007$ \\
$15$ & $0.008$ & $0.008$ & $0.008$ & $0.008$ & $0.008$ \\
$20$ & $0.009$ & $0.009$ & $0.009$ & $0.009$ & $0.009$ \\
$50$ & $0.014$ & $0.014$ & $0.012$ & $0.012$ & $0.013$ \\

\hline \multicolumn{6}{|c|}{Iterations}\\
\hline
\diagbox[width=\dimexpr \textwidth/20+2\tabcolsep\relax, height=.75cm]{d}{m}& $1.1$ & $1.25$ & $1.5$ & $2$ & $3$ \\
\hline
$5$ & $6.0$ & $6.0$ & $6.0$ & $5.6$ & $5.0$ \\
$10$ & $6.0$ & $6.0$ & $6.0$ & $5.0$ & $5.0$ \\
$15$ & $6.0$ & $6.0$ & $5.0$ & $5.0$ & $5.0$ \\
$20$ & $6.0$ & $6.0$ & $5.0$ & $5.0$ & $5.0$ \\
$50$ & $6.0$ & $6.0$ & $5.0$ & $5.0$ & $5.0$ \\
  \hline 
\multicolumn{6}{|c|}{Residual}\\
\hline
\diagbox[width=\dimexpr \textwidth/20+2\tabcolsep\relax, height=.75cm]{d}{m}& $1.1$ & $1.25$ & $1.5$ & $2$ & $3$ \\
\hline
$5$ & $2\cdot 10^{-12}$ & $1 \cdot 10^{-14}$ & $1\cdot 10^{-14}$ & $2 \cdot 10^{-09}$ & $2\cdot 10^{-12}$ \\
$10$ & $1 \cdot 10^{-13}$ & $1\cdot 10^{-14}$ & $3 \cdot 10^{-10}$ & $4\cdot 10^{-14}$ & $7\cdot 10^{-15}$ \\
$15$ & $5\cdot 10^{-14}$ & $1 \cdot 10^{-14}$ & $2 \cdot 10^{-09}$ & $5\cdot 10^{-15}$ & $6\cdot 10^{-15}$ \\
$20$ & $4\cdot 10^{-14}$ & $1 \cdot 10^{-14}$ & $8\cdot 10^{-10}$ & $5\cdot 10^{-15}$ & $4\cdot 10^{-15}$ \\
$50$ & $4\cdot 10^{-10}$ & $3\cdot 10^{-12}$ & $2\cdot 10^{-10}$ & $8\cdot 10^{-13}$ & $1\cdot 10^{-13}$ \\
  \hline
\end{tabular}
}
\vspace{.2cm}

\resizebox{0.5\textwidth}{!}{
\begin{tabular}{|c|c|c|c|c|c|}
\hline
\multicolumn{6}{|c|}{\textbf{Forward}}\\
\hline
\multicolumn{6}{|c|}{Time}\\
\hline
   \diagbox[width=\dimexpr \textwidth/20+2\tabcolsep\relax, height=.75cm]{d}{m}  & $1.1$ & $1.25$ & $1.5$ & $2$ & $3$ \\
   \hline
$5$ & $0.046$ & $0.049$ & $0.049$ & $0.049$ & $0.049$ \\
$10$ & $0.061$ & $0.061$ & $0.061$ & $0.061$ & $0.061$ \\
$15$ & $0.066$ & $0.066$ & $0.066$ & $0.066$ & $0.067$ \\
$20$ & $0.079$ & $0.079$ & $0.079$ & $0.079$ & $0.079$ \\
$50$ & $0.136$ & $0.136$ & $0.136$ & $0.136$ & $0.136$ \\
\hline \multicolumn{6}{|c|}{Residual}\\
\hline
\diagbox[width=\dimexpr \textwidth/20+2\tabcolsep\relax, height=.75cm]{d}{m}& $1.1$ & $1.25$ & $1.5$ & $2$ & $3$ \\
\hline
$5$ & $3\cdot 10^{-15}$ & $3\cdot 10^{-15}$ & $3\cdot 10^{-15}$ & $3\cdot 10^{-15}$ & $3\cdot 10^{-15}$ \\
$10$ & $4\cdot 10^{-15}$ & $3\cdot 10^{-15}$ & $3\cdot 10^{-15}$ & $3\cdot 10^{-15}$ & $3\cdot 10^{-15}$ \\
$15$ & $4\cdot 10^{-14}$ & $4\cdot 10^{-14}$ & $3\cdot 10^{-14}$ & $3\cdot 10^{-14}$ & $2\cdot 10^{-14}$ \\
$20$ & $1\cdot 10^{-12}$ & $9\cdot 10^{-13}$ & $7\cdot 10^{-13}$ & $5\cdot 10^{-13}$ & $4\cdot 10^{-13}$ \\
$50$ & $2\cdot 10^{-4}$ & $2\cdot 10^{-4}$ & $2\cdot 10^{-4}$ & $1\cdot 10^{-4}$ & $8\cdot 10^{-5}$ \\
  \hline
\end{tabular}}
\end{table}
\section{Recovering the density of $W$}\label{sec:momentmatching}
Once the first $N+1$ coefficients $\varphi_0,\varphi_1,\ldots,\varphi_{N}$ of the Taylor expansion of the Laplace-Stieltjes transform $\varphi(z) =\mathbb{E}[\exp(-zW)]$ have been approximated, we can address the problem of recovering of the density function $f(x)$ of $W$. Since the Laplace-Stieltjes transform is entire, the sequence of moments uniquely determines the density. A natural approach is therefore to approximate $f(x)$ by imposing a moment-matching condition on a suitable vector space of functions. More specifically, we consider an approximant  expressed as a linear combination of generalized orthogonal polynomials, a standard technique for reconstructing a density from its moments; see, for instance, \cite{Provost2012}.

\paragraph{Choice of the basis functions.}To select a suitable family of polynomials, we rely on specific features of the density $f(x)$, which can either be inferred \textit{a priori} or estimated empirically. It is well known that the distribution of $W$ has a point mass at $0$, corresponding to the extinction probability $q$, and is absolutely continuous on the interval $(0, \infty)$; see~\cite[Theorem 8.3]{Harris1963}. The value of $q$ can be computed as the smallest real solution of the equation $P(z)=z$; see, for instance,~\cite[Chapter~5, Theorem~1]{Athreya2012}. The absolutely continuous part of the distribution 
has support on $(0,\infty)$ and, in a right neighborhood of $0$, its density behaves like $x^{\alpha}$, where 
\begin{equation}\label{eq:a}
\alpha:= -\frac{\log P'(q)}{ \log m} - 1;
\end{equation}see~\cite[Theorem 1]{Dubuc1971}. Moreover, the right tail of the distribution decays at least exponentially; see, e.g.,~\cite{Fernley2024}.

In view of the above results, we propose to approximate the density function of $W$ by 
\begin{equation}\label{eq:fN}
	f(x) \approx f_S(x) := q\, \delta_0(x)+\sum_{j=0}^S c_j L_j^{(\alpha)}(\beta x) \, (\beta x)^\alpha e^{-\beta x},
\end{equation}
where $\delta_0(x)$ denotes the Dirac delta function, $\beta>0$ is a parameter,  $c_0, c_1, \ldots, c_S \in \mathbb{R}$ are coefficients to be determined, and $L_j^{(\alpha)}(x)$ are the generalized Laguerre polynomials with parameter $\alpha$, defined as follows. 
\begin{definition}
	The generalized Laguerre polynomials with parameter $\alpha$ are defined recursively by
	\begin{equation*}
L_j^{(\alpha)}(x) =
\begin{cases}
1, & j=0,\\[0.3em]
1+\alpha-x, & j=1,\\[0.3em]
\displaystyle
\frac{2j+\alpha-1-x}{j} L_{j-1}^{(\alpha)}(x)
-\frac{j+\alpha-1}{j} L_{j-2}^{(\alpha)}(x),
& j\ge 2.
\end{cases}
\end{equation*}
\end{definition}
The family $\{L_j^{(\alpha)}(\beta x)\}_{j\ge0}$ forms a system of orthogonal polynomials on $(0, +\infty)$ with respect to the weight $w(x) := (\beta x)^\alpha e^{-\beta x}$. Moreover, the inner products of $L_j^{(\alpha)}(\beta x)$ with the monomials $x^i$ (with respect to the weight $w(x)$) can be computed in closed form.

 The moment-matching approach consists in choosing the coefficients $c_0, \ldots, c_S$ in such a way that the first $N+1$ moments of $f_S(x)$ match the first $N+1$ moments of $W$. The moments $m_i = \mathbb{E}[W^i]$ are related to the
coefficients of the Taylor expansion of $\varphi$ via
\begin{equation}\label{eq:phimoments}
    \varphi_i = (-1)^i \frac{m_i}{i!} \Longleftrightarrow m_i = (-1)^i i! \varphi_i, \qquad i=0,1,\ldots,N.
\end{equation}
Substituting the expression of $f_S(x)$ into the integral defining
$\mathbb{E}[W^i]$, the moment-matching conditions can be written as the linear
system
\begin{equation}\label{eq:linsys}
    m_i
    = \sum_{j=0}^S c_j
      \int_0^\infty x^{i+\alpha}\beta^\alpha
      L_j^{(\alpha)}(\beta x)e^{-\beta x}\,\mathrm{d}x
      + \begin{cases}
          q, & i=0,\\
          0, & \text{otherwise},
        \end{cases}
\end{equation}
for $i=0,1,\ldots,N$. The number $S+1$ of unknowns coefficients may differ from the number $N+1$ of computed moments; in this case, we can view~\eqref{eq:linsys} as a least-squares problem instead. 
Let $B\in\mathbb{R}^{(N+1)\times(S+1)}$ denote the matrix with entries
\begin{equation*}
    b_{i,j}
    := \int_0^\infty x^{i+\alpha}\beta^\alpha
       L_j^{(\alpha)}(\beta x)e^{-\beta x}\,\mathrm{d}x, \quad i=0,\ldots,N, \;j=0,\ldots,S.
\end{equation*}
The matrix $B$, associated with the linear system~\eqref{eq:linsys}, has a
peculiar structure, which we characterize in the next theorem.

\begin{theorem}\label{thm:B}
    The matrix $B$ can be factorized as
    \begin{equation}\label{eq:B}
    \begin{small}
    B = \begin{bmatrix} \beta^{-1}\Gamma(\alpha+1) & & & \\ & \beta^{-2}\Gamma(\alpha+2) & & \\ & & \ddots & \\ & & & \beta^{-N-1}\Gamma(\alpha+N+1) \end{bmatrix} M \begin{bmatrix} 1 & & & \\ & -1 & & \\ & & \ddots & \\ & & & (-1)^{S} \end{bmatrix},
    \end{small}
    \end{equation}
    where $M\in\mathbb R^{(N+1)\times (S+1)}$ is the (rectangular) lower triangular matrix defined by \[
M_{i,j} =
\begin{cases}
\binom{i}{j}, & i \ge j,\\
0,            & i < j.
\end{cases}
\]
\end{theorem}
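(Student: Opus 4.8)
The plan is to compute the entries $b_{i,j}$ directly and recognize the resulting closed form as the claimed matrix product. First I would expand the generalized Laguerre polynomial in its monomial representation $L_j^{(\alpha)}(y) = \sum_{k=0}^{j} (-1)^k \binom{j+\alpha}{j-k}\frac{y^k}{k!}$, substitute $y = \beta x$, and then interchange the (finite) sum with the integral defining $b_{i,j}$. This reduces each entry to a linear combination of Gamma integrals $\int_0^\infty x^{i+\alpha+k} \beta^{\alpha+k} e^{-\beta x}\,\mathrm{d}x = \beta^{-i-1}\Gamma(i+\alpha+k+1)$ after the substitution $t = \beta x$. Collecting terms gives
\begin{equation*}
b_{i,j} = \beta^{-i-1}\sum_{k=0}^{j} (-1)^k \binom{j+\alpha}{j-k}\frac{\Gamma(i+\alpha+k+1)}{k!}.
\end{equation*}

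The key step is then to simplify this sum to $\beta^{-i-1}\Gamma(i+\alpha+1)\,(-1)^j\binom{i}{j}$, which is exactly the $(i,j)$ entry of the right-hand side of~\eqref{eq:B}. To do this I would factor out $\Gamma(i+\alpha+1)$, write $\Gamma(i+\alpha+k+1)/\Gamma(i+\alpha+1) = (i+\alpha+1)_k$ (the rising factorial), and recognize the remaining finite sum as a terminating hypergeometric evaluation. Concretely, after rewriting the binomial coefficient $\binom{j+\alpha}{j-k}$ in terms of Pochhammer symbols, the sum takes the form of a ${}_2F_1$ (or a Vandermonde-type sum) that can be evaluated in closed form by the Chu–Vandermonde identity, yielding the single term $(-1)^j\binom{i}{j}$; in particular the entry vanishes when $j > i$, reproducing the rectangular lower-triangular structure of $M$. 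An alternative, perhaps cleaner, route is to avoid hypergeometric machinery entirely: use the known orthogonality/generating-function identity expressing the monomial $x^i$ in the Laguerre basis, namely $x^i = \sum_{j} (\text{coefficient}) L_j^{(\alpha)}(x)$ with coefficients given by Gamma ratios, or equivalently use the Rodrigues-type relation together with integration by parts $i$ times; either way the off-diagonal pattern $\binom{i}{j}(-1)^j$ emerges from the repeated differentiation of $x^{i+\alpha}e^{-x}$.

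The main obstacle I anticipate is the bookkeeping in the hypergeometric simplification — correctly matching the finite sum to the hypothesis of the Chu–Vandermonde identity, and carefully tracking the signs, the $\beta$-powers, and the shift in the Gamma arguments so that the diagonal prefactor comes out as $\beta^{-i-1}\Gamma(\alpha+i+1)$ and the trailing diagonal as $(-1)^j$. Once that identity is in hand, the factorization~\eqref{eq:B} is immediate: the left diagonal matrix supplies $\beta^{-i-1}\Gamma(\alpha+i+1)$, the matrix $M$ supplies $\binom{i}{j}$ (with the correct lower-triangular support), and the right diagonal matrix supplies the alternating sign $(-1)^j$. I would close by noting that this also makes the least-squares system~\eqref{eq:linsys} numerically transparent, since $B$ is a diagonal scaling of a triangular integer matrix times a sign matrix.
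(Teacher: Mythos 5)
Your proposal is correct, but it takes a genuinely different route from the paper. The paper first gets the triangular structure ($b_{i,j}=0$ for $i<j$) from orthogonality of $L_j^{(\alpha)}$ against polynomials of degree below $j$, and then proves the closed form $b_{i,j}=(-1)^j\beta^{-i-1}\Gamma(\alpha+i+1)\binom{i}{j}$ by induction on $j$, using only the three-term recurrence in its own definition of the Laguerre polynomials: the inductive step writes $b_{i,j}$ in terms of $b_{i,j-1}$, $b_{i+1,j-1}$, $b_{i,j-2}$ and closes with elementary binomial identities. You instead expand $L_j^{(\alpha)}$ in the monomial basis, integrate term by term (unproblematic, since the sum is finite and the Gamma integrals converge for $\alpha>-1$), and evaluate the resulting terminating sum; this does work out as you anticipate: the sum equals $\binom{j+\alpha}{j}\,{}_2F_1(-j,\,i+\alpha+1;\,\alpha+1;\,1)$, Chu--Vandermonde gives $(c-b)_j/(c)_j=(-i)_j/(\alpha+1)_j$, and since $\binom{j+\alpha}{j}=(\alpha+1)_j/j!$ everything cancels to $(-1)^j\binom{i}{j}$, with the vanishing of $(-i)_j$ for $j>i$ delivering the lower-triangular support as a by-product rather than via a separate orthogonality argument. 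What the paper's induction buys is self-containedness (no explicit Laguerre expansion or hypergeometric summation is imported, only the recurrence already stated in the text); what your route buys is a one-shot closed-form evaluation of every entry, essentially the classical integral $\int_0^\infty t^{\alpha+i}L_j^{(\alpha)}(t)e^{-t}\,\mathrm{d}t=(-1)^j\binom{i}{j}\Gamma(\alpha+i+1)$ after the substitution $t=\beta x$, which also isolates the $\beta^{-i-1}$ and $(-1)^j$ factors cleanly. In either case the factorization \eqref{eq:B} follows immediately by reading off the two diagonal scalings, so your plan is a valid, and arguably more classical, alternative to the paper's proof.
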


\begin{proof}
With respect to the $L^2(0, \infty)$ inner product weighted by $(\beta x)^\alpha e^{-\beta x}$, 
the generalized Laguerre polynomial $L_j^{(\alpha)}(\beta x)$ is orthogonal to all polynomials of degree at most $j-1$. Therefore, $b_{i,j} = 0$ whenever $i < j$, so the matrix $B$ is lower triangular. 

Now let us prove that 
\begin{equation}\label{eq:induction}
b_{i,j} = (-1)^j \cdot \beta^{-i-1}\cdot \Gamma(\alpha+i+1) \cdot \binom{i}{j}
\end{equation}for all $i \ge j$, by induction on $j$. For $j=0$, we have
\begin{equation*}
    b_{i,0} = \beta^\alpha \int_{0}^\infty x^{i+\alpha} e^{-\beta x} \mathrm{d}x = \beta^{-i-1}\Gamma(\alpha+i+1) \text{ for all } i \ge 0.
\end{equation*}
For $j=1$, using $L_1^{(\alpha)}(x)=1+\alpha-x$, we obtain
\begin{align*}
b_{i,1}
&= \beta^\alpha \int_0^\infty x^{i+\alpha}(1+\alpha-\beta x)e^{-\beta x}\,\mathrm{d}x \\
&= (1+\alpha)\beta^{-i-1}\Gamma(\alpha+i+1)
   -\beta^{-i-1}\Gamma(\alpha+i+2) \\
&= -(i+1)\beta^{-i-1}\Gamma(\alpha+i+1),
\qquad i\ge1,
\end{align*}
which agrees with~\eqref{eq:induction}.

Now assume that \eqref{eq:induction} holds for all values of $\widetilde j \le j-1$ and all $i$. Using the recurrence relation for the generalized Laguerre polynomials, 
we write
\begin{align*}
b_{i,j}
&= \beta^\alpha \int_0^\infty x^{i+\alpha}
\left(
\frac{2j+\alpha-1-\beta x}{j}L_{j-1}^{(\alpha)}(\beta x)
-\frac{j+\alpha-1}{j}L_{j-2}^{(\alpha)}(\beta x)
\right)e^{-\beta x}\,\mathrm{d}x \\
&= \frac{2j+\alpha-1}{j}b_{i,j-1}
   -\frac{\beta}{j}b_{i+1,j-1}
   -\frac{j+\alpha-1}{j}b_{i,j-2}.
\end{align*}
Substituting the induction hypothesis yields
$
b_{i,j}
= (-1)^j \beta^{-i-1}\Gamma(\alpha+i+1)\,X,
$
where
\begin{align*}
X
&= -\frac{2j+\alpha-1}{j}\binom{i}{j-1}
   +\frac{\alpha+i+1}{j}\binom{i+1}{j-1}
   -\frac{j+\alpha-1}{j}\binom{i}{j-2}.
\end{align*}
A direct computation using standard binomial identities shows that $X=\binom{i}{j}$
for all $i\ge j$, which completes the inductive step.

\end{proof}

Let $c = \begin{bmatrix} c_0 & c_1 & \cdots & c_S \end{bmatrix}^\top$ denote the vector of unknown coefficients. Exploiting the special structure of the marix $B$ given in Theorem~\ref{thm:B}, we can rewrite the linear system~\eqref{eq:linsys} in the equivalent form

\begin{equation}\label{eq:linsysequiv}
M\,y = b,
\qquad
c_j = (-1)^j y_j,\quad j=0,\ldots,S,
\end{equation}
where $M$ is the lower triangular matrix defined in
Theorem~\ref{thm:B}, and the right-hand side $b\in\mathbb{R}^{N+1}$ is given by
\begin{equation}
b :=
\begin{bmatrix}\label{bb}
\dfrac{\beta\,\Gamma(1)}{\Gamma(\alpha+1)}(1-q) &
\dfrac{\beta^2\,\Gamma(2)}{\Gamma(\alpha+2)} &
\dfrac{\beta^3\,\Gamma(3)}{\Gamma(\alpha+3)}\,|\varphi_2| &
\dots &
\dfrac{\beta^{N+1}\Gamma(N+1)}{\Gamma(\alpha+N+1)}\,|\varphi_N|
\end{bmatrix}^{\top}.
\end{equation}

\begin{remark}
    Computing all the ratios of Gamma functions appearing in~\eqref{eq:linsysequiv} can be done in an efficient and robust way exploiting the relation
    \begin{equation*}
        \frac{\Gamma(j+\alpha)}{\Gamma(j)} = \Gamma(\alpha) \prod_{i=1}^j \frac{i}{i+\alpha}.
    \end{equation*}
    In particular, computing $b$ defined in \eqref{bb} requires only a single evaluation of $\Gamma(\alpha)$ and $\mathcal O(N)$ arithmetic operations.
\end{remark}



\begin{remark}
The matrix $M$ appearing in~\eqref{eq:B} coincides with the lower triangular factor in the Cholesky factorization of a Pascal matrix $\mathcal P$, whose entries are given by $\mathcal{P}_{i,j} = \binom{i+j}{j}$ for all $i,j \ge 0$; see, for instance,~\cite{Edelman2004}. Theorem~\ref{thm:B} thus reveals an elegant connection between moment-based density
reconstruction using generalized Laguerre polynomials and classical structures
associated with Pascal matrices.
\end{remark}

\paragraph{Mitigating the conditioning.}Pascal matrices $\mathcal P_n$ of size $n \times n$ have very large (nonnegative) entries and quickly become ill-conditioned as $n$ increases. In particular, their condition number grows asymptotically like $16^n / (n \pi)$; see~\cite[Section~28.4]{Higham2002}. This ill-conditioning extends to the matrix $M$ that appears in~\eqref{eq:B}. This means, in particular, that small errors in the computation of the coefficients of $\varphi(z)$ (and therefore in the computation of the moments of $W$) could result in large errors in the approximation of the coefficients $c_j$ in~\eqref{eq:fN}. 

To mitigate this problem, we adopt the following strategy:
\begin{enumerate}
\item For the matrix $M \in \mathbb{R}^{(N+1) \times (S+1)}$ and vector $b \in 
\mathbb{R}^{N+1}$ in~\eqref{eq:linsysequiv} and \eqref{bb}, we consider a rescaled version of the system in which the matrix $M$ is divided by the maximum entry in each row. More specifically, we let $\widetilde M := D_M^{-1} M$ and $\widetilde b := D_M^{-1} b$ denote the rescaled coefficient matrix and right-hand side, where $D_M$ is the diagonal matrix with entries $(D_M)_{j,j} = \max_i M_{j,i}$. 

\item We set $S+1 = \lfloor (N+1)/2\rfloor$, that is, we approximate the density of $W$ using an expansion~\eqref{eq:fN} with roughly half as many basis functions as available
moments. The coefficients are then obtained by solving the least-squares problem
\begin{equation}\label{eq:LS}
\min_y \| \widetilde M y - \widetilde b\| = \min_y \| D_M^{-1}My - D_M^{-1}b\|, \qquad c = \mathrm{diag}(1,-1,\ldots,(-1)^S) \cdot y.
\end{equation}
This choice is motivated by the fact that the resulting matrix $\widetilde M$ is substantially better conditioned, as illustrated in Figure~\ref{fig:pascal}.
\end{enumerate}
Although the condition number of $\widetilde M$ still grows rapidly with the matrix size, satisfactory approximations of the density of $W$ are obtained for moderate values of $S$ and $N$, so this does not pose a serious limitation in practice. Figure~\ref{fig:pascal} illustrates the growth of condition numbers for square, rectangular, and rescaled Pascal matrices.

\begin{figure}[htb]
    \centering
    \includegraphics[scale=.6]{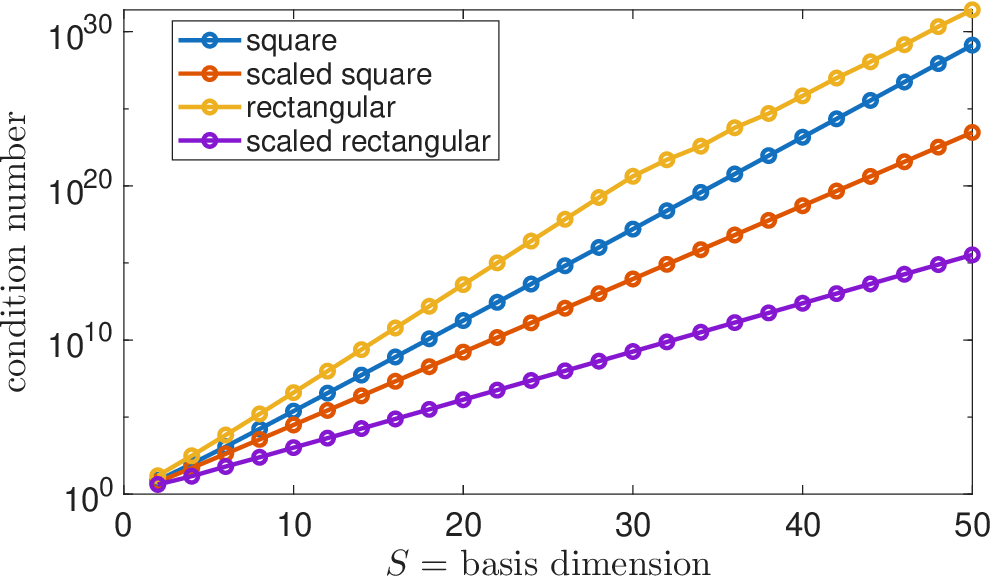}
    \caption{
     Condition numbers as a function of the basis dimension $S$.
The blue curve corresponds to $(S+1)\times(S+1)$ Pascal matrices, and the red curve
to their row-scaled versions. The yellow curve corresponds to $2(S+1)\times(S+1)$
Pascal matrices, and the purple curve to their scaled counterparts.
Row scaling significantly improves conditioning for both square and rectangular
matrices. For a fixed basis dimension $S$, the scaled rectangular matrices exhibit
the smallest condition numbers and are therefore used in our algorithm}
    \label{fig:pascal}
\end{figure}


\paragraph{Estimating $\beta$.}The parameter $\beta$ characterizes the decay of the right tail of the density of $W$. While lower bounds for this decay rate have been proposed in the literature (see, for instance,~\cite{Fernley2024}), these bounds might be too loose in practice. For this reason, we estimate $\beta$ empirically, following the strategy described in Section~\ref{sec:setup} below.

\section{Numerical examples}\label{sec:numericalexamples}

We implemented the proposed algorithm in \textsc{Matlab}; the code is available at \url{https://github.com/numpi/supercritical-galton-watson}, together with scripts to reproduce all numerical experiments reported in this paper. 

\subsection{Setup of the numerical experiments}\label{sec:setup}
For the different offspring distribution $P(z)$ considered below, we approximate the first $N+1=80$ coefficients of the Taylor expansion of the Laplace-Stieltjes transform $\varphi(z)$ of $W$ using Newton's method (Algorithm~\ref{alg:newton}). We use the initial vector $\varphi^{(0)} = \begin{bmatrix}1 & -1 & 0 &  \ldots & 0\end{bmatrix}^\top$ and a stopping tolerance of $10^{-14}$. 

The extinction probability $q$ is computed as the smallest real solution of the
equation $P(z)=z$, obtained using \textsc{Matlab}’s \texttt{roots} function for
polynomials. We then compute the parameter $\alpha$ using~\eqref{eq:a}.
To estimate the parameter $\beta$, we adopt the following heuristic procedure:
\begin{enumerate}
    \item We simulate the Galton-Watson process $M$ times until generation $T$; unless stated otherwise, we take $M = 100,000$ and $T = 12$.
    \item  Using the simulation output, we construct a histogram-based approximation of the density of $W$ via the \textsc{Matlab} built-in function \texttt{histogram}, which uses bins of uniform width.
    \item We estimate $\beta$ by fitting a linear regression to the logarithm of the last $30\%$ of the histogram bins, using \textsc{Matlab}’s \texttt{polyfit}, and take the estimated slope as our value of $\beta$.   
\end{enumerate}
Finally, we compute the first $40$ coefficients of~\eqref{eq:fN} by solving the rescaled version of the least-squares problem~\eqref{eq:linsysequiv}.

\subsection{Four examples}\label{sec:test1}
We test our algorithms on four GW processes with the following polynomial offspring probability generating functions:
\begin{itemize}
    \item $P_1(z) = 0.3z + 0.4z^2 + 0.2z^3 + 0.1z^4$;
    \item $P_2(z) = 0.5z + 0.3z^3 + 0.2z^4$;
    \item $P_3(z) = 0.1 + 0.1z + 0.1z^2 + 0.1z^3 + \ldots + 0.1 z^9$;
    \item $P_4(z) = 0.1 + 0.5z + 0.2z^3 + 0.1 z^4 + 0.1z^5$.
\end{itemize}
Using~\eqref{eq:a}, we find that the parameter $\alpha$ is positive for $P_1$ and $P_3$, and negative for $P_2$ and $P_4$. Therefore,  
these four examples illustrate different qualitative behaviors of the density of $W$: cases where $\lim_{x\to0^+}f(x)=0$ ($P_1$ and $P_3$), cases where $\lim_{x\to0^+}f(x)=\infty$ ($P_2$ and $P_4$), and situations with ($P_3$ and $P_4$) or without ($P_1$ and $P_2$) a point mass at $0$. The coefficients of $\varphi$ are computed via Newton's method and the density is approximated with the techniques described in Section~\ref{sec:momentmatching}.

Figures~\ref{fig:examples12} and~\ref{fig:examples34} display, in red, the proposed approximations of the density of $W$. 
The histograms represent the outputs of the simulations. The close agreement between the empirical and approximated densities indicates that the proposed method provides an accurate reconstruction of the distribution of $W$. Note that, in the cases in which $p_0 \neq 0$, there is a nonzero probability of extinction; this means that, in the histograms in Figure~\ref{fig:examples34}, the vertical bar in $0$ is approximating a delta function, and it is \emph{not} approximating the value of $q$.

\begin{figure}[htb]
    \begin{subfigure}[b]{0.48\textwidth}
		\centering
    \includegraphics[width=\textwidth]{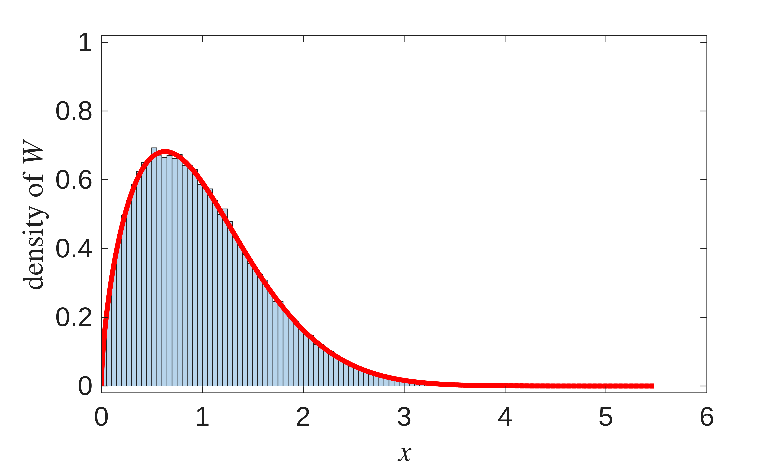}
    \end{subfigure}
    \begin{subfigure}[b]{0.48\textwidth}
		\centering
    \includegraphics[width=\textwidth]{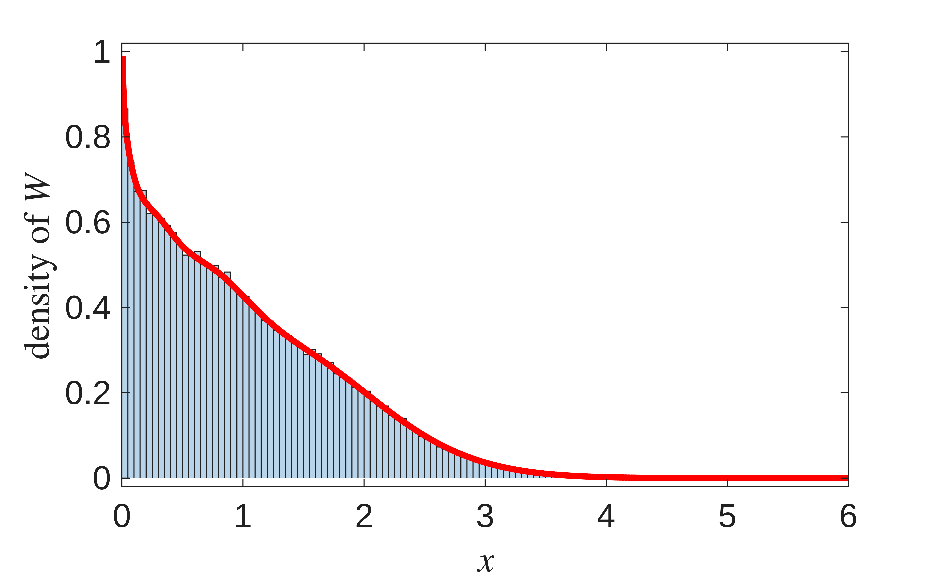}
    \end{subfigure}
    \caption{Approximations of the density of $W$ for GW processes with offspring p.g.f. $P_1(z)$ (left) and $P_2(z)$ (right). In both cases, the extinction
    probability is $q=0$, and hence there is no point mass at $0$.}
    \label{fig:examples12}
\end{figure}

\begin{figure}[htb]
    \begin{subfigure}[b]{0.48\textwidth}
		\centering
    \includegraphics[width=\textwidth]{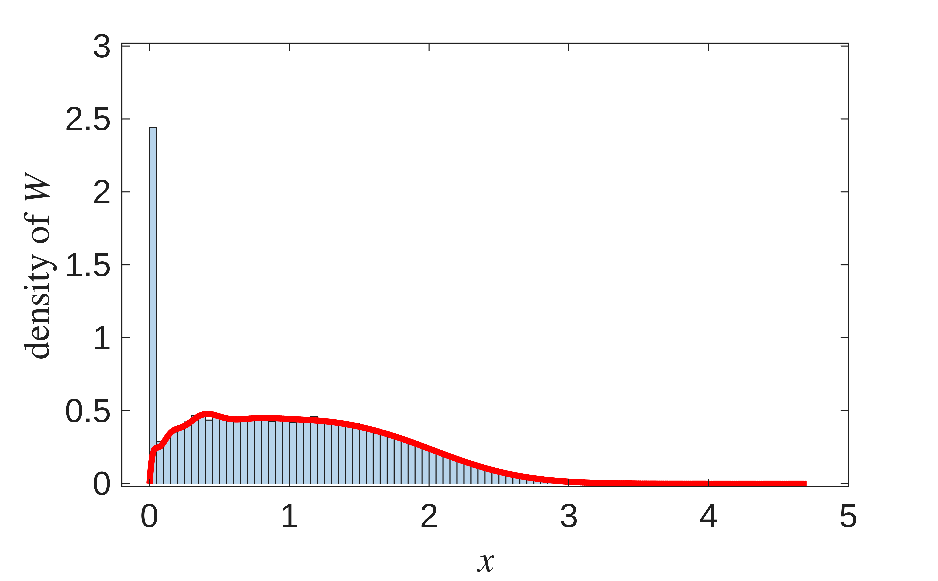}
    \end{subfigure}
    \begin{subfigure}[b]{0.48\textwidth}
		\centering
    \includegraphics[width=\textwidth]{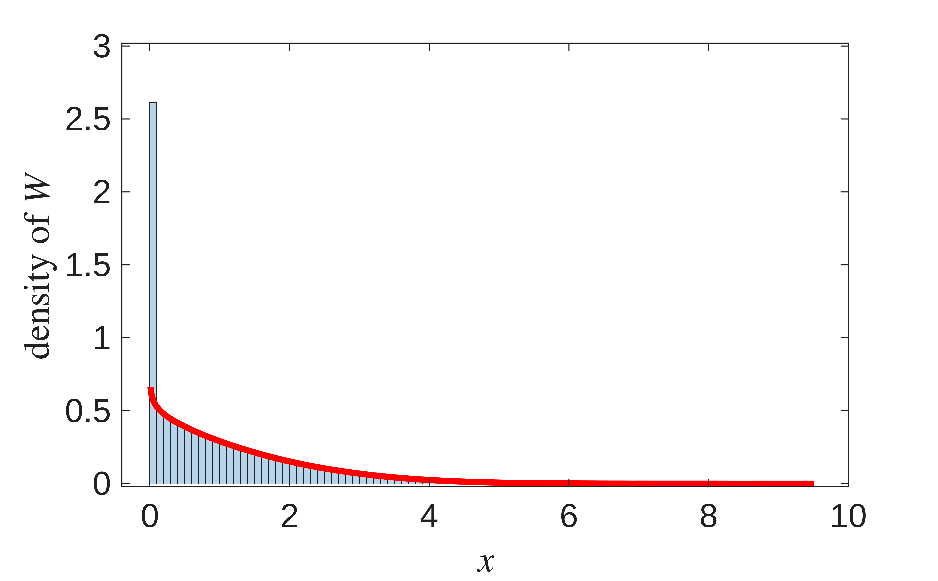}
    \end{subfigure}
    \caption{Approximations of the density of $W$ for GW processes with offspring p.g.f. $P_3(z)$ (left) and $P_4(z)$ (right). For $P_3(z)$, a smaller number of generations ($T=8$) was used in the simulation. In both cases, the
    extinction probability is strictly positive; the red curve represents our
    approximation of the absolutely continuous part of the density.}
    \label{fig:examples34}
\end{figure}


\subsection{Comparison with generalized Gamma distribution fitting}\label{sec:gamma}

Another approach for approximating the density of $W$ was proposed in \cite{Morris2024} and consists in fitting a generalized Gamma distribution by matching moments of $W$. More precisely, the density is approximated by
\begin{equation}\label{eq:generalized_gamma}
f(x) \approx
\frac{c_3}{c_1^{c_2}\Gamma(c_2/c_3)}\,
x^{c_2-1}
\exp\!\left[-\left(\frac{x}{c_1}\right)^{c_3}\right],
\end{equation}
where the parameters $c_1, c_2, c_3$ are chosen by minimizing 
a suitably weighted moment-matching loss function; see~\cite[Section~3.6]{Morris2024}.
Note that this approach may be applied, in principle, whenever the moments of $W$ are known; hence, it is not limited to quadratic offspring generating functions, unlike other parts of~\cite{Morris2024}. 

We compare this strategy with our approximation using Laguerre polynomials~\eqref{eq:fN} on two examples corresponding to the offspring distributions $P_1(z)$ and $P_3(z)$ from Section~\ref{sec:test1}. For both cases, we compute the moments of $W$ via Newton's method as in Section~\ref{sec:test1}, and we use the code provided in~\url{https://github.com/djmorris7/Computation_of_random_time-shifts} to estimate the parameters in~\eqref{eq:generalized_gamma}. We test the use of $5$, $10$, and $20$ moments. 
In practice, we observe that using a larger number of moments makes
the optimization problem increasingly difficult, and often degrades the quality of
the resulting approximation. In contrast, our method~\eqref{eq:fN} consistently uses $80$ moments and $40$ basis functions.


The results are shown in Figure~\ref{fig:gamma}. For the example based on $P_1(z)$
(left), using only five moments already produces a reasonable approximation with the generalized Gamma model, while increasing the number of moments leads to poorer fits. For the example based on $P_3(z)$ (right), the generalized Gamma approximation struggles to reproduce the shape of the density of $W$, notably due to the presence of two local maxima. In both cases, the Laguerre-based approximation~\eqref{eq:fN} follows the empirical histogram more closely.

Overall, we expect the approximation~\eqref{eq:fN} to be more accurate in general,
as it relies on a richer and more flexible class of approximating functions than the three-parameter generalized Gamma family.

\begin{figure}[htb]
    \begin{subfigure}[b]{0.48\textwidth}
		\centering
    \includegraphics[width=\textwidth]{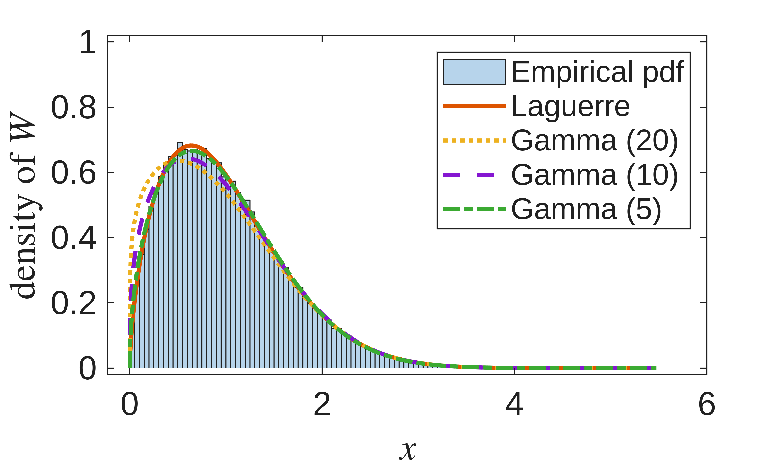}
    \end{subfigure}
    \begin{subfigure}[b]{0.48\textwidth}
		\centering
    \includegraphics[width=\textwidth]{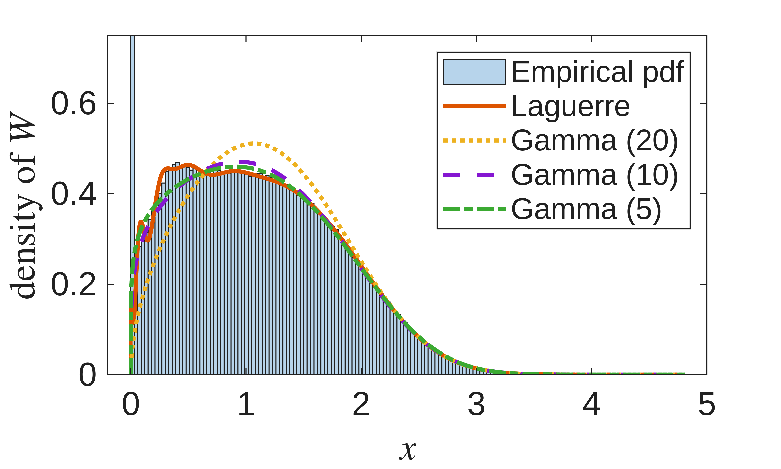}
    \end{subfigure}
    \caption{Comparison of the Laguerre-based approximation~\eqref{eq:fN} with the
    generalized Gamma approximation~\eqref{eq:generalized_gamma} for offspring
    distributions $P_1(z)$ (left) and $P_3(z)$ (right). Histograms represent empirical
    densities obtained from simulations.}
    \label{fig:gamma}
\end{figure}

\subsection{Estimating early fluctuations in bird populations}\label{sec:birds}

We apply our methodology to estimate the density of the random variable $W$ associated with the early growth dynamics of two bird species: the whooping crane and the Chatham Island black robin. 

When fitting a GW process to yearly population census data of the whooping crane in~\cite[Section 2.1]{braunsteins2025existence}, the resulting offspring distribution is 
\begin{equation*}
    P(z) = 0.1538 + 0.6491z +0.1971 z^2.
\end{equation*}
This corresponds to a mean offspring number $m \approx 1.04$ and an extinction probability $q \approx 78.03\%$. 

On the other hand, the offspring distribution of the Chatham Island black robin considered in~\cite[Section 6]{braunsteins2025consistent}, obtained by setting the
probability of successful nesting to $r=1$ in order to remove population-size dependence, is given by
\begin{equation*}
P(z) = 0.1036 + 0.3551z + 0.3448 z^2 + 0.1553 z^3 + 0.0366 z^4 + 0.0044 z^5 + 0.0002z^6. 
\end{equation*}
This distribution has a mean offspring number $m \approx 1.68$ and an extinction probability $q \approx 17.93\%$.

The approximated densities of the random variable $W$ corresponding to the GW models for these two birds species are shown in \rev{Figures~\ref{fig:whoopingcrane} and~\ref{fig:blackrobin}}. As discussed in Section~\ref{sec:supercritical}, approximating the distribution of $W$ provides information on establishment times and on the distribution of the population size at a fixed generation. 

For instance, conditional on non-extinction, the $90\%$ prediction interval for  the population size in the $30$-th generation, $Z_{30}$, is approximately $[0.96,   47.23]$ for the whooping crane population. Similarly, the 90\% prediction interval for the size of the $10$th generation, $Z_{10}$, conditional on non-extinction, is approximately $[26.89,\,532.21]$ for the black robin population. 

Finally, Figure~\ref{fig:birds} displays the distribution of the approximate establishment time $\tau_{100}$ for the two species, computed using~\eqref{eq:density_establishment_time}. We observe in \rev{Figures~\ref{fig:whoopingcrane} and~\ref{fig:blackrobin}} that the distribution of 
$W$ exhibits a wider spread for the whooping crane population than for the black robin population. Since $\mathbb E[W]=1$
 in both cases, this increased variability is consistent with a larger probability mass near zero for the whooping crane population, which in turn leads to longer typical establishment times, as illustrated in Figure~\ref{fig:birds}. 

\begin{figure}[htb]
    \begin{subfigure}[b]{0.42\textwidth}
		\centering
    \includegraphics[width=\textwidth]{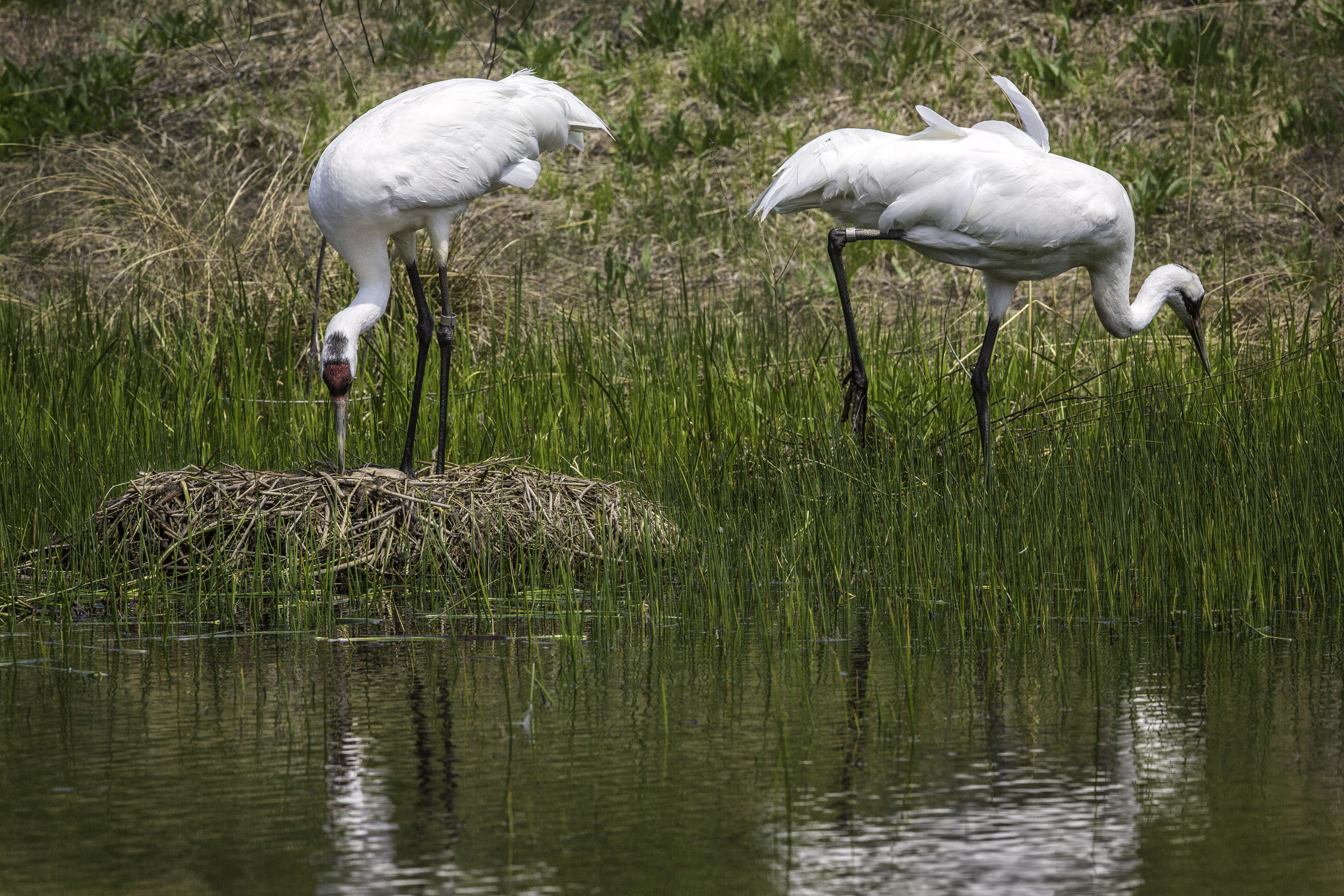}
    \end{subfigure}
    \begin{subfigure}[b]{0.54\textwidth}
		\centering
    ~
    \vspace{-4mm}
    ~\includegraphics[width=\textwidth]{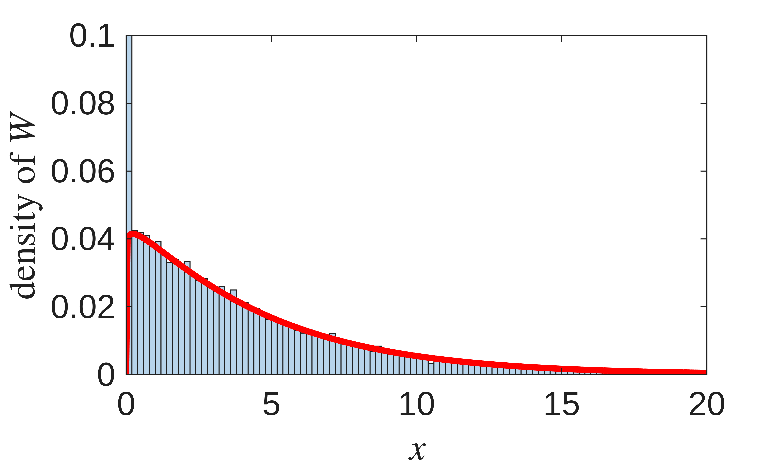}
    \end{subfigure}
    \caption{Plot corresponding to the density of $W$ for the whooping crane. For this example, we have $m \approx 1.04$ and the distribution has a heavier right tail compared to the previous examples. For this reason, to estimate the parameter $\beta$ we have used the $40\%$-to-$70\%$ range of the histogram, resulting in $\beta \approx 0.246$. The simulation was run until time $T = 100$.}
    \label{fig:whoopingcrane}
\end{figure}

\begin{figure}[htb]
    \begin{subfigure}[b]{0.42\textwidth}
		\centering
    \includegraphics[width=0.95\textwidth]{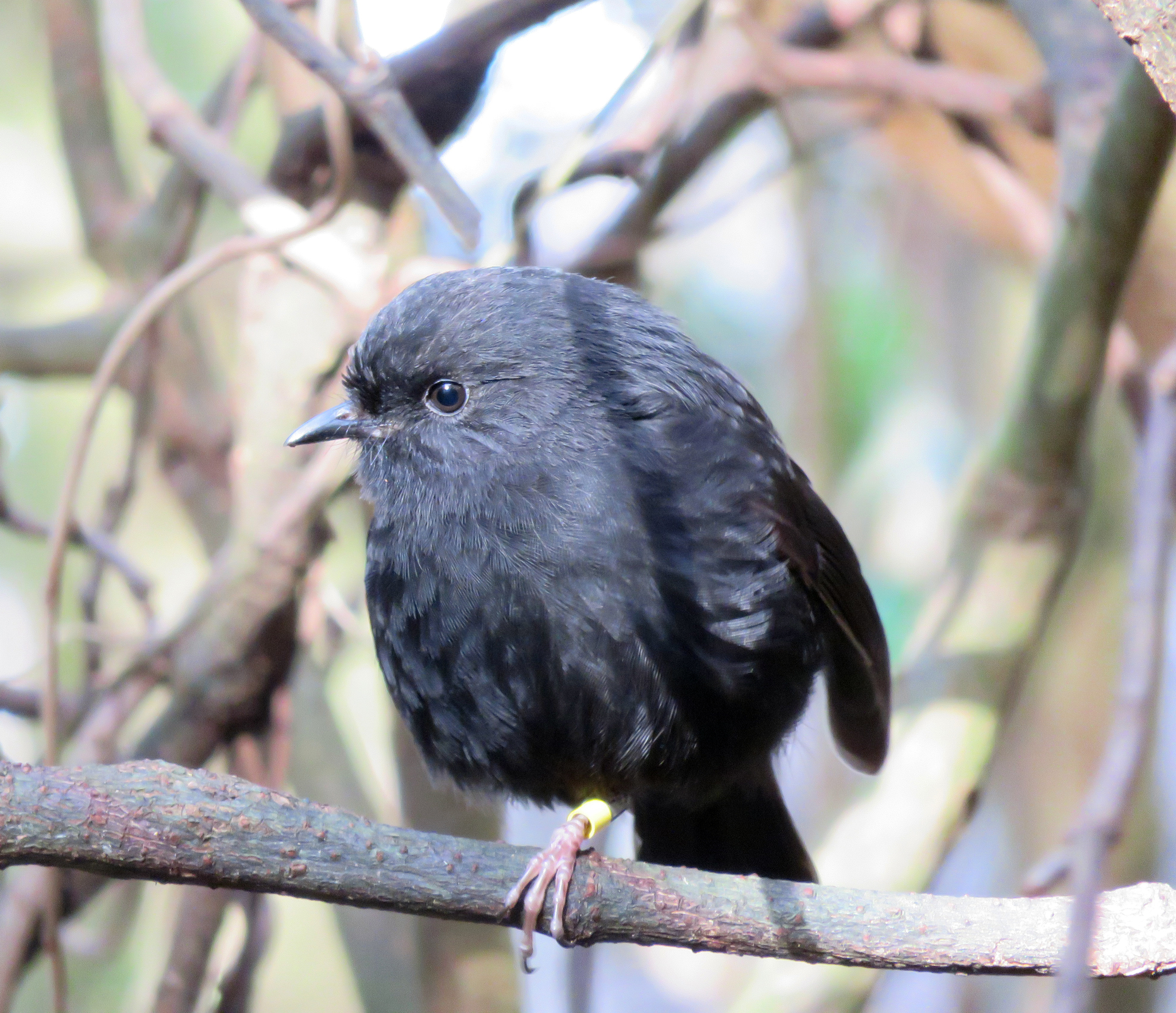}
    \end{subfigure}
    \begin{subfigure}[b]{0.54\textwidth}
		\centering
    \includegraphics[width=\textwidth]{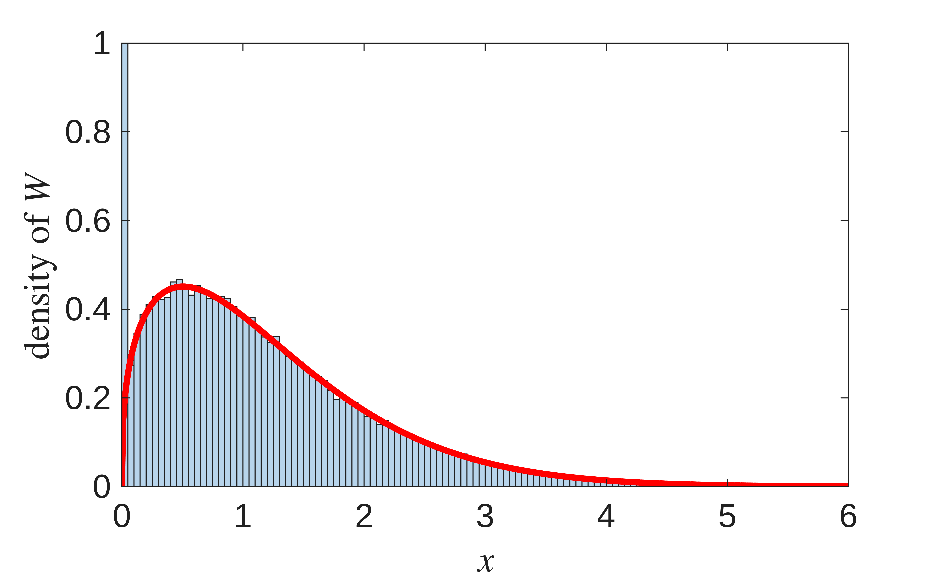}
    \end{subfigure}
    \caption{Plot corresponding to the density of $W$ for the Chatham Islands black robin population. For this example, we have $m \approx 1.68$ and the estimated parameter $\beta$ is $0.543$. Simulations were run until $T = 15$.}
    \label{fig:blackrobin}
\end{figure}


\begin{figure}[htb]
    \centering
    \includegraphics[width=.5\textwidth]{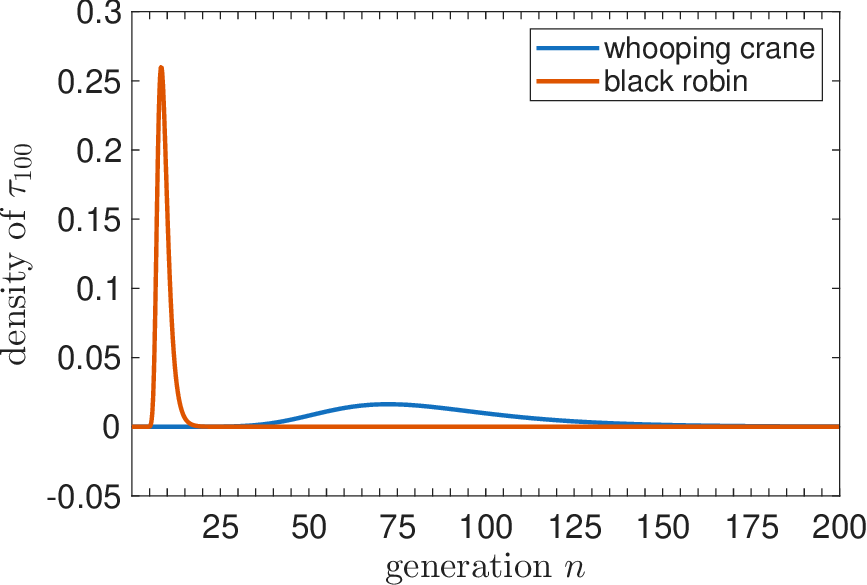}
    \caption{Approximate density of the time at which the population size first reaches $100$ individuals for the whooping crane and black robin populations; see Section~\ref{sec:birds}.}
    \label{fig:birds}
\end{figure}

\subsection{Handling non-polynomial offspring probability generating functions}\label{sec:lim}

When the Laplace-Stieltjes transform $\varphi(z)$ of the random variable $W$ is not entire, there is no guarantee that the techniques of Section~\ref{sec:poincare} based on the fixed-point iteration~\eqref{eq:infinite-fixed-point} will apply. However, one could approximate the offspring probability generating function  by a polynomial -- for instance, by truncating its power series expansion, if it exists -- and then apply one of the methods in Section~\ref{sec:poincare} in the hope of obtaining useful information. 

We explore this idea in the setting of linear fractional generating functions, which correspond to modified geometric offspring distributions and constitute one of the very few cases in which the distribution of $W$ is known explicitly; see~\cite[Chapter 1]{Harris1963}. Given real numbers $c \in (0,1)$ and $b \in (0, 1-c]$, the probability generating function of the offspring distribution is
\begin{equation}\label{eq:Plinearfractional}
P(z) = 1 - \frac{b}{1-c} + \frac{bz}{1-cz} = 1 - \frac{b}{1-c} + \sum_{i=1}^{\infty} b c^{i-1} z^i.
\end{equation}
We concentrate on the case $b = 1-c$, which corresponds to zero extinction probability. In this case, the Laplace-Stieltjes transform of $W$ and its density are known exactly and do not depend on $c$:
\begin{equation}\label{eq:exp}
\varphi(z) = \frac{1}{1+z}, \qquad f(x) = e^{-x}.
\end{equation}
The Taylor coefficients of $\varphi$ are given by $\varphi_i = (-1)^i$ for all $i \ge 0$, and $\varphi$ is analytic only in the unit disk of the complex plane.

Although $P(z)$ is not a polynomial, we can apply the forward substitution method of Section~\ref{sec:forward} without modifications, since evaluating~\eqref{eq:forward} only requires computing the functions $P$ and $P'$ applied to Toeplitz matrices. In contrast, applying Newton's method (or the fixed-point iteration) requires truncating the series~\eqref{eq:Plinearfractional} to a finite number of terms. Here we truncate the series to its first $80$ terms and we run our algorithm for two values of $c$ in $\{0.7, 0.9\}$.  

Figure~\ref{fig:LF} \rev{(left)} shows the coefficients of $\varphi$ approximated by Newton's method and by forward substitution. We observe that for the smaller value of $c$, more coefficients are closer to their correct value. 
In both cases, Newton’s method exhibits stagnation, and increasing the number of iterations does not improve the approximation. A possible explanation for this behavior is that the coefficients of the offspring generating function decay more quickly when $c$ is small, making the truncated generating function ``closer'' to a polynomial of low degree. 

By contrast, the forward substitution method is able to recover the correct coefficients of $\varphi(z)$. More generally, it is applicable to all offspring probability generating functions that are analytic around $0$, without requiring truncation.

\rev{In Figure~\ref{fig:LF} (right), we consider the case $c = 0.7$ and show the approximation error, measured in the Kolmogorov distance, between the exact solution~\eqref{eq:exp} and our approximations, obtained from the coefficients of $\varphi$ computed via Newton's method (plain line) and the forward substitution method (dashed line). For the estimation of the parameter $\beta$ (required by our method), we follow the strategy described in Section~\ref{sec:setup}, using the last $40\%$ of the histogram bins obtained by simulating $M = 10^4$ samples with $T = 10$, which yields $\beta \approx 0.2$. }

\rev{On the same plot we also report the approximation error between the exact solution and the empirical cumulative distribution function (CDF) of the scaled population size at generation $g$, $W_g$, obtained by Monte Carlo simulations, for $g = 1, \ldots, 10$ (star-marked line). For each value of $g$, we generated $10^6$ samples of $W_g$. The Kolmogorov distance is computed as the maximum discrepancy between discretised CDFs over the interval $[0,10]$, using $10^7$ equispaced points.}


\rev{The convergence of the distribution of $W_g$ to the true solution levels off beyond generation $g=6$. This behavior is due to the statistical error in the Monte Carlo approximation of the CDF of $W_g$, which is of order $\mathcal{O}(1/\sqrt{M}) \approx 10^{-3}$, where $M=10^6$ denotes the number of samples.
Increasing the number of samples, especially for larger values of $g$, becomes computationally expensive. By contrast, our method, with both Newton and forward substitution, achieves a Kolmogorov distance of approximately $10^{-4}$ while requiring a substantially smaller number of simulations ($10^4$ samples), used only for the estimation of $\beta$.
}



\begin{figure}[htb]
\begin{subfigure}[b]{0.42\textwidth}
		\centering    \includegraphics[scale=.5]{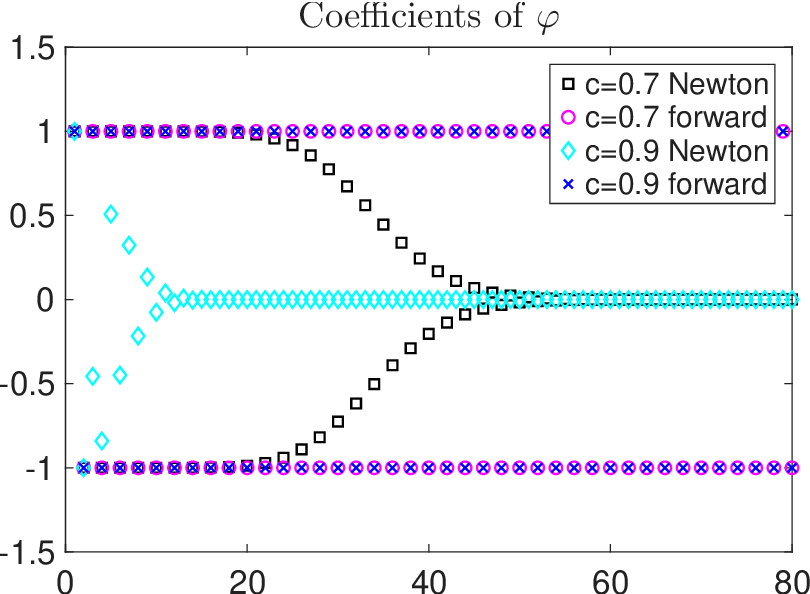}
\end{subfigure}
\begin{subfigure}[b]{0.42\textwidth}
		\centering
~
        \vspace{-5mm}
        \includegraphics[scale=.53]{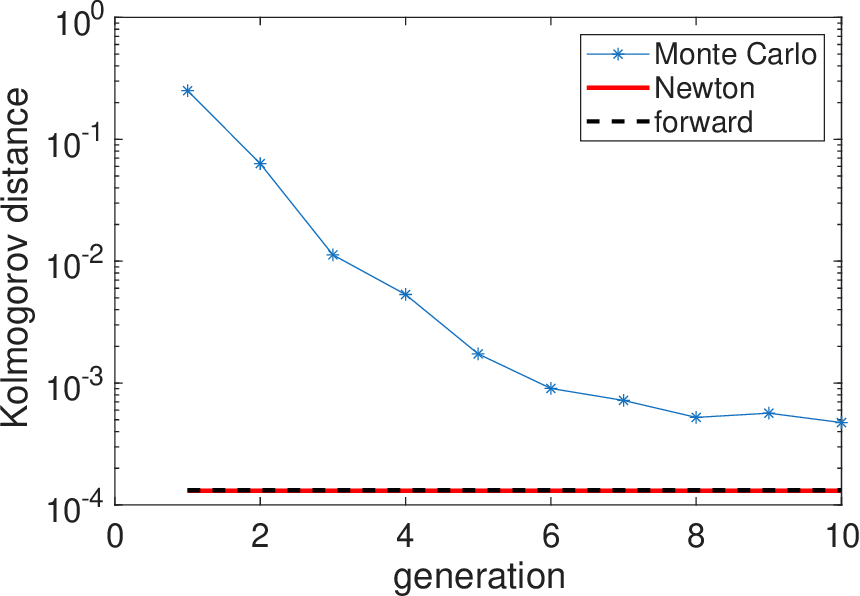}
        \end{subfigure}

        \vspace{3mm}
    \caption{\rev{Left:} approximations of the coefficients of $\varphi$ obtained by Newton's method and the forward substitution method for the two linear fractional examples considered in Section~\ref{sec:lim}. Note that the forward substitution method achieves very precise approximations in both cases. \rev{Right: Kolmogorov distance between the exact distribution of the linear fractional model with $c = 0.7$ and the approximations obtained with three methods. The plain red line and the dashed black line refer to our method using coefficients of $\varphi$ generated by Newton's method and the forward substitution method, respectively. The star-marked blue line represents the error associated to the empirical distribution obtained after $1,2,\ldots,10$ generations using $10^6$ Monte Carlo samples.}} 
    \label{fig:LF}
\end{figure}


\section{Conclusions and outlook}
We have proposed a practical numerical method for estimating the density of the random variable $W$ arising as the scaling limit of supercritical single-type Galton-Watson branching processes. The method applies to a broad class of models in which the offspring
distribution has finite support. Through several numerical experiments, we have shown that the proposed approach is computationally efficient -- requiring simulations only for a secondary task, namely the estimation of the tail parameter $\beta$ -- and yields accurate approximations of the density of $W$ across a variety of regimes.

The extension of the approach to multi-type processes appears conceptually
straightforward: we would need to solve the analogue of the functional equation~\eqref{eq:poincare} in the space of multivariate power series, and approximate  the density of $W$ using linear combinations of tensor products of Laguerre polynomials with exponential damping. In practice, however, mitigating the curse of dimensionality
and ensuring scalability to models with a large number of types poses significant challenges. Developing efficient strategies to address these issues constitutes a natural direction for future research.

\section*{Acknowledgements}
The authors thank the anonymous referees for their careful reading and helpful comments.
Sophie Hautphenne would like to thank the Australian Research Council (ARC) for support through the Discovery Project DP200101281. \rev{The work of Stefano Massei has been supported by the PRIN 2022
Project ``Low-rank Structures and Numerical Methods in Matrix and Tensor Computations and their
Application''. The work of Alice Cortinovis has been partially supported by the PRIN 2022 Project ``MOLE: Manifold constrained Optimization and LEarning''.}

\bibliographystyle{abbrv}
\bibliography{biblio}
\end{document}